\DeclareMathAlphabet{\mathpzc}{OT1}{pzc}{m}{it}
\newtheorem{thm}{Theorem}
\newtheorem{cor}[thm]{Corollary}
\newtheorem{lem}[thm]{Lemma}
\theoremstyle{definition}		
\newtheorem{defn}[thm]{Definition}	
\newcommand{\ring}[1]{\mathcal{#1}}	
\newcommand{\nat}{\mathbb N}			
\newcommand{\ff}[1]{{\mathbb F}_{#1}}		
\newcommand{\ffs}[1]{{\mathbb F}_{#1}^\star}	
\newcommand{\ffx}[1]{\ff{#1}[X]}		
\newcommand{\ffxy}[1]{\ff{#1}[X,Y]}		
\newcommand{\ffxyz}[1]{\ff{#1}[X,Y,Z]}		
\DeclareMathOperator{\Tr}{Tr}
\def\nonsquare{\boxslash}
\newcommand{\N}{\ring N}
\newcommand{\GC}{\ring T}
\begin{document}

\title[A PTR polynomial for the Hughes planes]
{A PTR polynomial for the Hughes planes and a new class of permutation
polynomials involving Catalan numbers}

\author[S. Brittain]{Stephen Brittain}
\author[R.S. Coulter]{Robert S. Coulter}
\address[S. Brittain \& R.S. Coulter]{Department of Mathematical Sciences, University of Delaware,
Newark, DE, 19716, United States of America.}

\author[A.M.W. Hui]{Alice Man Wa Hui}
\address[A.M.W. Hui]{School of Mathematics and Statistics, Clemson University,
Clemson, SC, 29634, United States of America.}

\begin{abstract}
Hughes introduced the projective planes that bear his name in 1957 and they have since been
studied extensively. However, until now, no polynomial representation of a
planar ternary ring that represents them has been determined. In this paper,
we rectify this omission by determining a reduced PTR polynomial for any Hughes
plane defined over a regular nearfield.
The polynomials obtained provide a new surprising connection: both the
Catalan numbers and generalized Catalan numbers occur among the coefficients,
depending on the representation.
Since every PTR polynomial has connections with several classes of permutation
polynomials, we obtain three new infinite classes of permutation polynomials as
a consequence of our main result, and these, too, involve the Catalan numbers.
The differential uniformity of new permutation polynomials is also determined.
\end{abstract}

\maketitle

\section{Introduction}

Throughout $q=p^e$ for some odd prime $p$ and $e\in\nat$ and
$Q=q^2$. We use $\ff{q}$ to
denote the finite field on $q$ elements, $\ffs{q}$ the non-zero elements,
and $\ffx{q}, \ffxy{q}, \ffxyz{q}$ to denote
the rings of polynomials in 1, 2 and 3 indeterminates over $\ff{q}$,
respectively.
The squares and non-squares of $\ff{q}$ are denoted by $\square_q$ and
$\nonsquare_q$, respectively.
The quadratic character will be denoted by $\eta$. That is, $\eta(x)=1$ if 
$x$ is a non-zero square, $\eta(x)=-1$ if $x$ is a non-square, and $\eta(0)=0$.
When the underlying field is $\ff{q}$, it is easily seen that
$\eta(x)=x^{(q-1)/2}$.

It is well known that any function on $\ff{q}$ can be represented by infinitely
many polynomials in $\ffx{q}$, and uniquely in {\em reduced form} over
$\ffx{q}$ by a polynomial of degree less than $q$. 
These facts extend to multivariate functions also.
In this paper we will be especially interested in a special type of trivariate
function on $\ff{q}$.
\begin{defn}
A function $T:\ff{q}^3\rightarrow\ff{q}$ is called a
{\em planar ternary ring (PTR)} if it satisfies the following 5 properties:
\begin{enumerate}[label=(\Alph*)]
\item $T(a,0,z)=T(0,b,z)=z$ for all $a,b,z\in\ff{q}$.
\item $T(x,1,0)=x$ and $T(1,y,0)=y$ for all $x,y\in\ff{q}$.
\item If $a,b,c,d\in\ff{q}$ with $a\ne c$, then there exists a unique
$x$ satisfying $T(x,a,b)=T(x,c,d)$.
\item If $a,b,c\in\ff{q}$, then there is a unique $z$ satisfying
$T(a,b,z)=c$.
\item If $a,b,c,d\in\ff{q}$ with $a\ne c$, then there is a unique pair
$(y,z)$ satisfying $T(a,y,z)=b$ and $T(c,y,z)=d$.
\end{enumerate}
\end{defn}
The motivation for studying such functions stems from work of Hall
\cite{H-1943-pp}, who introduced the concept of a PTR defined over an arbitrary
set when he developed a method for introducing a coordinate system on an
arbitrary projective plane. In fact, Hall proved in
\cite{H-1943-pp} that a PTR $T$ defined on an arbitrary set was equivalent to a
projective plane. Moreover, the only specific requirement of the arbitrary
set was that it had cardinality equal to the order of the plane.

It is well known that the only finite projective planes so far found have
prime-power order. Consequently, to study the known finite planes, there is
nothing prohibiting the underlying coordinatizing
set being chosen to be a finite field of the appropriate order.
Since we have the polynomial representation of functions over finite fields
mentioned above, a polynomial $T\in\ffxyz{q}$ that induces a PTR under
evaluation is referred to as a {\em PTR polynomial}. This idea was formalized
and expounded upon by Coulter in \cite{C-2019-ocpop}, and we refer the reader
to that paper for more information about general forms of PTR polynomials.

PTR polynomials have been determined for a number of different
planes. For instance they are known for the classical plane, as well as a host
of commutative semifield planes: in the odd order case through their
equivalence with planar DO polynomials, see Coulter and Henderson
\cite{CH-2008-cpas}, and Kosick \cite{K-2009-csoop}; in the even order case
see Kantor and Williams \cite{KW-2003-sspal}.
While studying the concept of optimal coordinatization, Joshi
\cite{J-2023-asopp} determined PTR polynomials for a number of projective
planes of small order, including all of the known projective
planes of order 16.
Before the work of this article, the only non-translation planes for which a
PTR polynomial had been determined were:
\begin{itemize}
\item Four Lenz-Barlotti type I.1 planes of order 16; see \cite{M-2017-pposo}.
(For information about the Lenz-Barlotti classification, see Dembowski
\cite{D-1968-fg}, Section 3.1.)
\item The Mathon plane of order 16; see \cite{M-2017-pposo}.
(This is the only Lenz-Barlotti type II.1 plane of order 16 known.)
\item The Hughes planes of order 9 and 25.
\item The Figeuroa plane of order 27, \cite{F-1982-afont}.
\item The Lenz-Barlotti type II.1 planes of Coulter and Matthews
\cite{CM-1997-pfapo}.
\end{itemize}
The PTR polynomials for the planes of small order just listed were determined
by Joshi \cite{J-2023-asopp}, while the PTR polynomials for the
Coulter-Matthews planes can be obtained directly from
the planar polynomials used to construct them in \cite{CM-1997-pfapo}.
There are two known infinite classes of Lenz-Barlotti type I planes --
the Hughes planes and the Figueroa planes -- and strikingly, 
no general form of a PTR polynomial for either of them 
has been determined until now, despite these two classes of planes
being among the most important examples known.

In this article, we partially rectify this by determining a PTR polynomial for
the Hughes planes. Specifically, we shall prove the following.
\begin{thm}\label{hughesptr}
Let $T\in\ffxyz{Q}$ be the polynomial defined by
\begin{equation*}
T(X,Y,Z)
= M(X,Y) + Z - \sum_{i=0}^{q-2}g_i(X)t_q(Y)^{i+1}t_q(Z)^{q-(i+1)},
\end{equation*}
where
\begin{equation*}
M(X,Y)=XY-2^{-1}t_{(Q+1)/2}(X)t_q(Y)
\end{equation*}
and
\begin{equation*}
g_i(X) = (-4)^{-(i+1)}\sum_{j=0}^{i+1}  C[j(q-1)+i]X^{j(q-1)+i+1}.
\end{equation*}
Here, $C[n]$ is the $n$-th Catalan number and $t_n(X)=X^n-X$.
Then $T$ is a reduced PTR polynomial for the Hughes plane of order $Q$ defined over
the regular nearfield.
\end{thm}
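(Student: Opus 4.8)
The plan is to obtain $T$ constructively rather than by checking the axioms (A)--(E) directly: set up the standard nearfield model of the Hughes plane $\mathcal{H}(Q)$, fix a quadrangle adapted to its Baer subplane $PG(2,q)$, carry out Hall's coordinatization procedure \cite{H-1943-pp}, and then rewrite the resulting $\ff{Q}$-valued planar ternary ring as a polynomial over $\ff{Q}$. Since any coordinatization of a projective plane is automatically a PTR, this shows in one stroke that $T$ induces a PTR and that the plane it coordinatizes is $\mathcal{H}(Q)$; that $T$ is moreover the \emph{reduced} PTR polynomial is then only a matter of bounding degrees.

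First I would record the polynomial form of the nearfield multiplication. Realising the regular nearfield as $N=(\ff{Q},+,\circ)$ with $a\circ b=ab$ for $b\in\square_Q\cup\{0\}$ and $a\circ b=a^qb$ for $b\in\nonsquare_Q$, and using $a^q=a-t_q(a)$, $\eta(b)=b^{(Q-1)/2}$ and $b^{(Q+1)/2}=b+t_{(Q+1)/2}(b)$, the $\eta$-weighted average of $ab$ and $a^qb$ collapses to
\[ a\circ b \;=\; ab \;-\; 2^{-1}\,t_{(Q+1)/2}(b)\,t_q(a), \]
so that $M$ is exactly the reduced polynomial representing $(a,b)\mapsto b\circ a$ (multiplication in the opposite nearfield). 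I would also note, for repeated use below, that $t_q$ is $\ff{q}$-linear with kernel $\ff{q}$ and image $\ker\Tr_{\ff{Q}/\ff{q}}$, that $t_q(c)^q=-t_q(c)$ — hence $t_q(c)^{q-1}=-1$ whenever $c\notin\ff{q}$ — and that $4^{q-1}=1$ in $\ff{q}$. Now choose the frame of $\mathcal{H}(Q)$ so that the triangle of reference and the unit point lie in the Baer subplane $PG(2,q)$; the affine points are then the pairs $(x,y)\in\ff{Q}^2$ and the affine lines are $[m,k]$, $y=T(x,m,k)$. Tracing incidence through the $3\times 3$ matrix description of the lines of $\mathcal{H}(Q)$ (see Dembowski \cite{D-1968-fg}) one finds: for a kernel slope $m\in\ff{q}$ the line $[m,k]$ is classical and $T(x,m,k)=m\circ x+k=xm+k$, consistent with $M(X,Y)=XY$ when $Y\in\ff{q}$; while for $m\notin\ff{q}$ one obtains $T(x,m,k)=m\circ x+k$ corrected by a term that measures how $\mathcal{H}(Q)$ twists off the subplane, and this correction is governed by a \emph{nearfield division} — concretely, by solving over $\ff{Q}$ an equation of the form $w=1+\xi w^2$ (equivalently $\xi w^2-w+1=0$), with $\xi=\xi(x,t_q(m),t_q(k))$.

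The crucial observation is that the branch of $w=1+\xi w^2$ that is regular at $\xi=0$ is the Catalan generating function $C(\xi)=\sum_{n\ge 0}C[n]\xi^n=\tfrac{1-\sqrt{1-4\xi}}{2\xi}$, and the $4$ and the $2$ visible here are the source of the $(-4)^{-(i+1)}$ in $g_i$ and of the $2^{-1}$ in $M$. Passing to the \emph{reduced} representative over $\ff{Q}$ means replacing this formal series by a polynomial of degree $<Q$: because $x\in\ff{Q}$ (so $x^Q=x$) and because $\xi$ is, up to a unit, a product of elements of $\im{t_q}$ — on which raising to the $q$-th power simply negates — the powers $w^n$ of the relevant combination collapse according to residues modulo $q-1$. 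Writing $n=j(q-1)+i$ with $0\le i\le q-2$ and collecting terms by the power $t_q(m)^{i+1}$ produces exactly the polynomials $g_i(X)$, the index ranges $0\le i\le q-2$ and $0\le j\le i+1$ emerging from this reduction; reassembling gives the correction term $-\sum_{i=0}^{q-2}g_i(X)\,t_q(Y)^{i+1}t_q(Z)^{q-(i+1)}$, i.e.\ the displayed formula for $T$. As a consistency check one can compare the polynomial so obtained, for $q=3$ and $q=5$, with the PTR polynomials for the Hughes planes of orders $9$ and $25$ found in \cite{J-2023-asopp}.

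It remains to confirm that $T$ is reduced, which is routine: each summand $g_i(X)\,t_q(Y)^{i+1}t_q(Z)^{q-(i+1)}$ has $X$-degree $(i+1)q\le(q-1)q=Q-q$, $Y$-degree $(i+1)q\le Q-q$ and $Z$-degree $(q-i-1)q\le Q-q$, all strictly less than $Q$; $M(X,Y)$ has $X$-degree $(Q+1)/2<Q$ and $Y$-degree $q<Q$; and the lone $Z$ contributes degree $1$. Hence $\deg_X T,\deg_Y T,\deg_Z T<Q$, and by uniqueness of the reduced form $T$ is the reduced PTR polynomial of $\mathcal{H}(Q)$. The genuine obstacle is the third step: identifying the exact quadratic functional equation that the Hughes incidence imposes, recognising its power-series solution as the Catalan generating function, and — the delicate part — executing the reduction modulo $X^Q-X$ so that the exponent bookkeeping (via $t_q(\cdot)^q=-t_q(\cdot)$ and the folding of $w$-powers into residue classes mod $q-1$) reproduces precisely the coefficients $(-4)^{-(i+1)}C[j(q-1)+i]$ together with the stated ranges of $i$ and $j$.
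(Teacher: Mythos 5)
There is a genuine gap at the heart of your argument: the step that actually produces the polynomial is never carried out. You assert that tracing the Hughes incidence through the matrix description of the lines leads to ``a correction governed by a nearfield division, concretely by solving $w=1+\xi w^2$,'' with an unspecified $\xi(x,t_q(m),t_q(k))$, and that expanding the Catalan generating function and folding powers modulo $q-1$ then yields the $g_i$. None of this is substantiated, and it is not how the correction term arises. The paper starts from Dembowski's known piecewise PTR function, observes that for $y\notin\ff{q}$ the case split depends on whether $x+k$ is a square in $\ff{Q}$ (where $k=t_q(z)/t_q(y)\in\ff{q}$), and encodes that dichotomy via the involution $\phi_k(X)=(X+k)^{(Q+1)/2}-k$, giving $T(x,y,z)=z+2^{-1}\left(x\Tr(y)-t_q(y)\phi_k(x)\right)$. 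One then expands $(X+k)^{(Q+1)/2}$ binomially, uses $k^{(Q-1)/2}=1$ to clear negative powers of $k$, and substitutes $k=t_q(Z)/t_q(Y)$; the Catalan numbers enter purely arithmetically through the congruence $C[n]\equiv 2(-4)^n\binom{(Q+1)/2}{n+1}\bmod p$ applied to the resulting binomial coefficients. No quadratic functional equation is ever solved. (Your instinct that the generating function $\tfrac{1-\sqrt{1-4\xi}}{2\xi}$ is lurking is not wrong --- the square root corresponds to the exponent $(Q+1)/2$ in $\phi_k$ --- but the paper explicitly disclaims any geometric derivation of the Catalan numbers, and your proposal does not supply one.)

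A second concrete gap is the truncation of the inner sum at $j=i+1$. After reducing exponents via $t_q(Y)^{j(q-1)+i+1}\equiv(-1)^j t_q(Y)^{i+1}\bmod (Y^Q-Y)$, the natural range of $j$ is $0\le j\le(q-1)/2$, not $0\le j\le i+1$; one must prove $C[j(q-1)+i]\equiv 0\bmod p$ for $i+1<j\le(q-1)/2$, which the paper does with a Lucas-theorem computation. You state only that the range ``emerges from the reduction.'' Your closing degree count is fine, and your identification of $M(X,Y)$ with the reduced polynomial of the nearfield product is correct (modulo the sign slip: $a^q=a+t_q(a)$, not $a-t_q(a)$), but these are the easy parts; as written, the two essential computations are replaced by plausibility claims and a numerical check at $q=3,5$.
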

Coulter showed in \cite{C-2019-ocpop}, Theorem 4.2, that there are a number of
permutation polynomial (PP) classes represented by a single PTR polynomial.
In particular, for \Cref{hughesptr}, we have
\begin{itemize}
\item for each $(y,z)\in\ffs{Q}\times\ff{Q}$, $T(X,y,z)$ is a PP over $\ff{Q}$.
\item for each $(x,z)\in\ffs{Q}\times\ff{Q}$, $T(x,Y,z)$ is a PP over $\ff{Q}$.
\item for each $(x,y)\in\ff{Q}\times\ff{Q}$, $T(x,y,Z)$ is a PP over $\ff{Q}$.
\end{itemize}
Thus, \Cref{hughesptr} produces three infinite classes of PPs over $\ff{Q}$ for
arbitrary $Q=p^{2e}$, $p$ an odd prime and $e\in\nat$. To the best of our
knowledge, these are the first PP classes found where the Catalan numbers
appear as coefficients. An alternative version of the PTR polynomial, see
\Cref{generalizedcoeffs}, shows that these same PPs can be written in such
a way as to have the generalized Catalan numbers appearing as coefficients.

The paper is layed out as follows. In \Cref{hughesplanes} we give a brief
description of the Hughes planes and introduce a functional form of the PTR that
has been known for many years. \Cref{involution} introduces an involutory
PP which lies at the heart of our PTR polynomial construction. The following
section deals with some preliminary results, mostly regarding identities
involving the Catalan numbers. The PTR polynomial is determined in
\Cref{ptrpoly}, establishing \Cref{hughesptr} as well as the alternative
identity where the generalized Catalan numbers appear. For completeness, in the
final section we consider the differential uniformity of the PPs arising from
\Cref{hughesptr}.

\section{The Hughes planes} \label{hughesplanes} 

The Hughes planes were introduced in 1957 by Hughes \cite{H-1957-acond}.
They were the first infinite class of projective planes found without any
point-line transitivities. His construction also contained
the only previously known example of such a plane, a plane of order 9
discovered 50 years earlier by Veblen and Wedderburn \cite{VW-1907-ndanp}.
At the time of writing, the Hughes planes and the planes of Figueroa
\cite{F-1982-afont} (from 1982) remain the only known infinite classes of
projective planes
with this property. As such, they have been studied extensively.

The Hughes plane can be defined over any nearfield of order $Q=q^2$ with
center $\ff{q}$. Nearfields were first identified and studied by Dickson
\cite{D-1905-doaga, D-1905-ofa}. Dickson constructed an infinite class, now
known as the regular nearfields, along with 7 sporadic examples, now known as
the irregular nearfields.
Zassenhaus \cite{Z-1935-uef} proved in 1935 that Dickson had, in fact,
determined all nearfields in his original work.
In this paper, we will only be interested in the Hughes planes constructed from
regular nearfields, as this represents an infinite class of projective planes.

\subsection{The regular nearfields}

The regular nearfield of order $Q=q^2$ with center $\ff{q}$ is the algebraic
structure $\N=(\ff{Q},+,\star)$ where $+$ is
field addition in $\ff{Q}$ and $\star$ is defined by
\begin{equation*}
x\star y=
\begin{cases}
xy &\text{if $x\in\square_Q$,}\\
xy^q &\text{if $x\in\nonsquare_Q$.}
\end{cases}
\end{equation*}
We note that the definition for nearfield multiplication in the regular
nearfield is usually given with the Frobenius map on the
$x$ with the square/nonsquare condition on the $y$. We use this equivalent form
for convenience only. Our final PTR polynomial description will be the same, but
for the $X$ and $Y$ terms switched. We take care in all that follows to be
consistent -- the observant reader will see that the PTR function given below
also has the indeterminates $x$ and $y$ switched when compared directly with
the source text.

\subsection{The PTR function for the Hughes planes}

Dembowski \cite{D-1968-fg}, page 248, provides the following description of
a PTR for the Hughes plane: for $x,y,z\in\ff{Q}$,
\begin{equation*}
T(x,y,z)=
\begin{cases}
x\star y+z &\text{if $y\in\ff{q}$,}\\
(x+k)\star y+k' &\text{if $y\notin\ff{q}$, where $(k,k')\in\ff{q}\times\ff{q}$
uniquely satisfies $z=ky+k'$}.
\end{cases} 
\end{equation*}
Ostensibly, our task is straightforward. We want to determine a polynomial
version of the function $T$. The difficulty lies in understanding the
conditions that describe the two cases and providing a polynomial that
encompasses these conditions along with the behavior of the function.

To this end, we first note that if $x+k\in\square_Q$, then
$$(x+k)\star y+k'=xy+ky+k'=xy+z,$$
and if $x+k\in\nonsquare_Q$, then
\begin{align*}
    (x+k)\star y+k' = (x+k)y^q+k'
    &=xy^q+ky^q+k'\\
    &= xy^q+(ky+k')^q\\
    &= xy^q+z^q.
\end{align*}
We can therefore rewrite $T$ as
\begin{equation}\label{piecewiseT}
T(x,y,z)=
\begin{cases}
xy+z &\text{ if $y\in \ff{q}$,}\\
xy+z &\text{ if $y\notin\ff{q}$ and $x+k\in\square_Q$,
where $(k,k')\in\ff{q}\times\ff{q}$ uniquely satisfies $z=ky+k'$,}\\
xy^q+z^q &\text{ if $y\notin\ff{q}$ and $x+k\in\nonsquare_Q$,
where $(k,k')\in\ff{q}\times\ff{q}$ uniquely satisfies $z=ky+k'$.}
\end{cases}
\end{equation}

\section{An involutory permutation polynomial} \label{involution}

We now introduce a permutation polynomial over $\ff{Q}$ that is fundamental to
our PTR polynomial construction.
For any $k\in\ff{Q}$, define $\phi_k\in\ffx{Q}$ by
$\phi_k(X)=(X+k)^{(Q+1)/2}-k$.
The key properties of $\phi_k$ are outlined in the following lemma.
\begin{lem} \label{philemma}
The following statements hold.
\begin{enumerate}[label=(\roman*)]
\item $\phi_k$ is a permutation polynomial over $\ff{Q}$.
\item $\phi_k$ induces an involution under evaluation. Specifically, for
$x\in\ff{Q}$, 
\begin{equation*}
\phi_k(x) =
\begin{cases}
x	&\text{if $x+k\in\square_Q$,}\\
-x-2k	&\text{if $x+k\in\nonsquare_Q$.}
\end{cases} 
\end{equation*}
\end{enumerate}
\end{lem}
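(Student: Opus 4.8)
The plan is to prove part (ii) first by direct evaluation, and then derive part (i) as an easy consequence. For (ii), fix $k,x\in\ff{Q}$ and use the identity $\eta(u)=u^{(Q-1)/2}$ to write $(x+k)^{(Q+1)/2}=(x+k)^{(Q-1)/2}(x+k)=\eta(x+k)(x+k)$. Splitting on the value of $\eta(x+k)$: if $x+k\in\square_Q$ then $\eta(x+k)=1$, so $\phi_k(x)=(x+k)-k=x$; if $x+k\in\nonsquare_Q$ then $\eta(x+k)=-1$, so $\phi_k(x)=-(x+k)-k=-x-2k$; and if $x+k=0$ (the one case the two branches share a boundary) then $(x+k)^{(Q+1)/2}=0$ gives $\phi_k(x)=-k$, which agrees with $x=-k$ and also with $-x-2k=-k$, so the stated piecewise formula is consistent. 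This establishes (ii).

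For (i), the key preliminary observation is that $Q=q^2\equiv 1\pmod 4$ since $q$ is odd, hence $(Q-1)/2$ is even and $-1\in\square_Q$, equivalently $\eta(-1)=1$. I would then show $\phi_k$ induces an involution on $\ff{Q}$, from which it is immediate that $\phi_k$ is a bijection of $\ff{Q}$ and therefore a permutation polynomial. Indeed, take $x\in\ff{Q}$. If $x+k\in\square_Q$ (including $x=-k$), then $\phi_k(x)=x$ by (ii), so $\phi_k(\phi_k(x))=x$. If $x+k\in\nonsquare_Q$, then $\phi_k(x)=-x-2k$ and $\phi_k(x)+k=-(x+k)$; since $\eta(-(x+k))=\eta(-1)\eta(x+k)=\eta(x+k)=-1$, applying the nonsquare branch of (ii) again gives $\phi_k(\phi_k(x))=-(-x-2k)-2k=x$. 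Hence $\phi_k\circ\phi_k=\mathrm{id}$ on $\ff{Q}$.

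A direct alternative for (i), avoiding (ii), is to note that $\gcd\!\left((Q+1)/2,\,Q-1\right)=1$ (which again uses $Q\equiv 1\pmod 4$, since any common divisor divides $2$ but $4\nmid Q+1$), so $X^{(Q+1)/2}$ is a permutation polynomial of $\ff{Q}$, and $\phi_k$ is its composition with the two affine permutations $X\mapsto X+k$ and $X\mapsto X-k$. There is no serious obstacle here: the only point that requires care, and that drives both arguments, is the observation that $-1$ is a square in $\ff{Q}$ because $Q=q^2\equiv 1\pmod 4$; everything else is routine manipulation of the quadratic character.
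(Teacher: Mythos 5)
Your proposal is correct and follows essentially the same route as the paper: evaluate $\phi_k(x)=\eta(x+k)(x+k)-k$ to get (ii), then use $-1\in\square_Q$ to verify the involution property, from which (i) follows. Your added observation that $-1\in\square_Q$ because $Q=q^2\equiv 1\pmod 4$, and your alternative proof of (i) via $\gcd((Q+1)/2,Q-1)=1$, are both correct but do not change the substance of the argument.
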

\begin{proof}
It suffices for us to prove (ii), since this implies (i).
Note that
$$\phi_k(x)=(x+k)^{(Q-1)/2}(x+k)-k=\eta(x+k)\, (x+k)-k.$$ 
Clearly, $\phi_k(-k)=-k$. For $x\ne-k$, we have
\begin{equation*}
\phi_k(x) =
\begin{cases}
(x+k) -k=x&\text{if $x+k\in\square_Q$,}\\
-(x+k)-k=-x-2k	&\text{if $x+k\in\nonsquare_Q$.}
\end{cases} 
\end{equation*}
Thus, we see $\phi_k$ induces the claimed function. It remains to prove this
function is an involution.
Since $\phi_k$ acts as the identity map on the set $\{x\,:\,x+k\in\square_Q\}$,
it is enough to observe that if $x+k\in\nonsquare_Q$, then
\begin{align*}
\phi_k(\phi_k(x)) &=\phi_k(-x-2k)\\
&=-(-x-2k)-2k \qquad\text{(as $-1\in\square_Q$)}\\
&=x,
\end{align*}
which proves (ii).
\end{proof}

\section{Some identities involving binomial coefficients and the Catalan numbers}

There are a number of facts needed to prove our main results. Some of these 
are concerned with binomial coefficients and the Catalan numbers.
We shall outline those now.

For a fixed prime $p$, the base-$p$ expansion of a number $\alpha$
will be represented as $\alpha=(\alpha_k\cdots\alpha_0)_p$, where
$$\alpha = \sum_{i=0}^k \alpha_i p^i, \text{ with } 0\le \alpha_i<p.$$
We shall be dealing with binomial coefficients throughout, including fractional
binomial coefficients. We use the convention 
$\binom{n}{k} = 0$ when $n<k$ and $n$ is an integer.
We recall the following result of Lucas \cite{L-1878-tdfns}.
\begin{lem}[Lucas' Theorem] \label{lucas}
Let $p$ be a prime and $\alpha\ge\beta$ be positive integers with $\alpha$
and $\beta$ having base-$p$ expansions
$\alpha=(\alpha_t\cdots\alpha_0)_p$ and $\beta=(\beta_t\cdots\beta_0)_p$,
respectively.
Then
\begin{equation*}
\binom{\alpha}{\beta}\equiv \prod_{i=0}^t \binom{\alpha_i}{\beta_i} \bmod p.
\end{equation*}
\end{lem}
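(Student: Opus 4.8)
The plan is to prove Lucas' Theorem via the generating-function identity for $(1+X)^\alpha$ in the polynomial ring $\ffx{p}$, comparing coefficients of powers of $X$ on both sides. The cornerstone is the ``freshman's dream'' congruence $(1+X)^p \equiv 1 + X^p \pmod p$, which holds because each intermediate coefficient $\binom{p}{k}$ with $0 < k < p$ is divisible by $p$: from the identity $k\binom{p}{k} = p\binom{p-1}{k-1}$ one sees $p \mid k\binom{p}{k}$, and since $p$ is prime and coprime to $k$ in this range, $p \mid \binom{p}{k}$.

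First I would iterate this congruence to obtain $(1+X)^{p^i} \equiv 1 + X^{p^i} \pmod p$ for every $i \ge 0$, by a short induction on $i$: raising both sides of $(1+X)^{p^{i-1}} \equiv 1 + X^{p^{i-1}}$ to the $p$-th power and applying the freshman's dream once more, now with $X^{p^{i-1}}$ in place of $X$, yields the case $i$. Next, using the base-$p$ expansion $\alpha = \sum_{i=0}^t \alpha_i p^i$, I would factor
\begin{equation*}
(1+X)^\alpha = \prod_{i=0}^t \left((1+X)^{p^i}\right)^{\alpha_i} \equiv \prod_{i=0}^t \left(1 + X^{p^i}\right)^{\alpha_i} \pmod p,
\end{equation*}
and expand each factor by the ordinary binomial theorem as $(1+X^{p^i})^{\alpha_i} = \sum_{\beta_i = 0}^{\alpha_i} \binom{\alpha_i}{\beta_i} X^{\beta_i p^i}$. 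Multiplying these together, a typical monomial in the product has exponent $\sum_{i=0}^t \beta_i p^i$ with $0 \le \beta_i \le \alpha_i < p$ and carries the coefficient $\prod_{i=0}^t \binom{\alpha_i}{\beta_i}$.

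The decisive step is the coefficient comparison. The coefficient of $X^\beta$ on the left-hand side is $\binom{\alpha}{\beta}$ by the binomial theorem. On the right-hand side I would invoke the uniqueness of base-$p$ representations: since each $\beta_i$ satisfies $0 \le \beta_i < p$, the assignment $(\beta_0, \ldots, \beta_t) \mapsto \sum_i \beta_i p^i$ is injective, so there is at most one tuple of digits producing the exponent $\beta$, namely the digits of $\beta$ itself. Hence the coefficient of $X^\beta$ on the right is exactly $\prod_{i=0}^t \binom{\alpha_i}{\beta_i}$, where $\beta = \sum_i \beta_i p^i$ is the base-$p$ expansion. Equating the two coefficients gives $\binom{\alpha}{\beta} \equiv \prod_{i=0}^t \binom{\alpha_i}{\beta_i} \pmod p$.

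The main conceptual obstacle is purely bookkeeping rather than depth: one must use the digit constraints $0 \le \beta_i < p$ in two distinct roles, first to guarantee that distinct digit tuples yield distinct exponents, and second to reconcile the cases where some digit $\beta_i$ exceeds $\alpha_i$. In the latter case the corresponding monomial simply does not appear in the expansion, which agrees precisely with the convention $\binom{\alpha_i}{\beta_i} = 0$ for $\alpha_i < \beta_i$ adopted in the paper, so the stated product formula remains valid with no extra hypotheses on the relative sizes of the digits.
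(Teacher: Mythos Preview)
Your proof is correct and is in fact the standard generating-function argument for Lucas' Theorem. However, the paper does not prove this lemma at all: it is stated as a recalled result with a citation to Lucas' original 1878 paper, and no proof is given. So there is no ``paper's own proof'' to compare against; your argument simply supplies what the authors chose to quote.
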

We immediately apply Lucas' Theorem to prove the following identity.
\begin{lem}\label{lemma from lucas}
Let $0\le a,b<q$ be integers. Then
$$\binom{(Q+1)/2}{aq+b}\equiv\binom{(q-1)/2}{a}\binom{(q+1)/2}{b}\bmod p.$$
In particular, if $\binom{(Q+1)/2}{aq+b}\not\equiv 0\bmod p$, then
$a\le (q-1)/2$ and $b\le (q+1)/2$.
\end{lem}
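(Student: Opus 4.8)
The plan is to read off base-$p$ expansions and then apply \Cref{lucas} twice, once forwards and once backwards.

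First I would record the base-$p$ digits of the three relevant quantities. Since $q=p^e$ and $(p^e-1)/2 = \tfrac{p-1}{2}(p^{e-1}+\cdots+p+1)$, the number $(q-1)/2$ has base-$p$ expansion equal to the digit $\tfrac{p-1}{2}$ repeated $e$ times. As $p$ is odd, $\tfrac{p-1}{2}+1=\tfrac{p+1}{2}<p$, so adding $1$ causes no carry and $(q+1)/2$ has base-$p$ digits $\tfrac{p+1}{2}$ in position $0$ and $\tfrac{p-1}{2}$ in positions $1,\dots,e-1$. Finally, from the identity $(Q+1)/2 = \tfrac{q-1}{2}\,q + \tfrac{q+1}{2}$ together with $0\le \tfrac{q-1}{2},\tfrac{q+1}{2}<q$, the base-$p$ expansion of $(Q+1)/2$ has $2e$ digits and is the concatenation of the expansion of $(q-1)/2$ (in the high $e$ positions) with that of $(q+1)/2$ (in the low $e$ positions).

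Next, write $a=(a_{e-1}\cdots a_0)_p$ and $b=(b_{e-1}\cdots b_0)_p$; because $0\le a,b<q=p^e$, the base-$p$ expansion of $aq+b$ is the concatenation $(a_{e-1}\cdots a_0\,b_{e-1}\cdots b_0)_p$ (allowing leading zeros, $2e$ digits). Assuming first that $aq+b\le (Q+1)/2$, \Cref{lucas} expresses $\binom{(Q+1)/2}{aq+b}$ modulo $p$ as a product of $2e$ binomials of single base-$p$ digits, and this product splits as the product over the low $e$ positions times the product over the high $e$ positions. By \Cref{lucas} applied in the opposite direction, the low product is $\equiv \binom{(q+1)/2}{b}$ and the high product is $\equiv\binom{(q-1)/2}{a}$, so multiplying gives the claimed congruence. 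It remains to treat $aq+b>(Q+1)/2$: a short check shows this forces $a>(q-1)/2$, or else $a=(q-1)/2$ and $b>(q+1)/2$, so one of the two binomials on the right-hand side is $0$; since also $\binom{(Q+1)/2}{aq+b}=0$ by our convention, the congruence holds trivially.

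The ``in particular'' statement then follows at once: if $\binom{(Q+1)/2}{aq+b}\not\equiv 0\bmod p$, then neither $\binom{(q-1)/2}{a}$ nor $\binom{(q+1)/2}{b}$ vanishes, hence $a\le (q-1)/2$ and $b\le (q+1)/2$ by the convention that $\binom{n}{k}=0$ for integer $n<k$. I do not expect a genuine obstacle here; the one place to be careful is the base-$p$ digit computation of $(Q+1)/2$ — equivalently, checking that adding $1$ to $(p^{2e}-1)/2$ produces no carry — after which everything is bookkeeping. As a sanity check (and an alternative route), the identity also drops out of $(1+X)^{(Q+1)/2}\equiv (1+X^q)^{(q-1)/2}(1+X)^{(q+1)/2}\bmod p$ — using $(1+X)^q\equiv 1+X^q$ — by comparing coefficients of $X^{aq+b}$ and noting that a monomial $X^{qi}X^{j}$ on the right with $0\le j\le (q+1)/2<q$ contributes precisely when $i=a$ and $j=b$.
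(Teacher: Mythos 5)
Your proof is correct and follows essentially the same route as the paper's: compute the base-$p$ digits of $(Q+1)/2$ via $\frac{Q+1}{2}=\frac{q-1}{2}q+\frac{q+1}{2}$, apply \Cref{lucas} forwards to split $\binom{(Q+1)/2}{aq+b}$ into single-digit factors, and then apply it backwards to reassemble the low and high blocks into $\binom{(q+1)/2}{b}$ and $\binom{(q-1)/2}{a}$. Your explicit handling of the case $aq+b>(Q+1)/2$ and the generating-function sanity check are welcome additions but do not change the substance of the argument.
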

\begin{proof}
Let $a=(a_{e-1}\cdots a_0)_p$ and $b=(b_{e-1}\cdots b_0)_p$.
Then
$$aq+b=\sum_{k=0}^{e-1}b_kp^k+\sum_{k=e}^{2e-1}a_{k-e}p^k.$$
As
\begin{equation*}
\frac{p^m+1}{2}=\frac{p+1}{2}+\sum_{k=1}^{m-1}\frac{p-1}{2}p^k,
\end{equation*}
for any $m\in\nat$, we may use Lemma \ref{lucas} for $m=2e$ and $m=e$ to obtain
\begin{align*}
\binom{(Q+1)/2}{aq+b}&\equiv \binom{\frac{p+1}{2}}{b_0}\prod_{k=1}^{e-1}\binom{\frac{p-1}{2}}{b_k}\prod_{k=0}^{e-1}\binom{\frac{p-1}{2}}{a_k}\bmod p\\
&\equiv \binom{\frac{p+1}{2}+\sum_{k=1}^{e-1}\frac{p-1}{2}p^k}{b}\binom{\sum_{k=0}^{e-1}\frac{p-1}{2}p^k}{a}\bmod p\\
&\equiv \binom{\frac{q+1}{2}}{b}\binom{\frac{q-1}{2}}{a}\bmod p, 
\end{align*}
which proves both claims.
\end{proof}

We shall also make use of the following easily observed result concerning
fractional binomial coefficients modulo a prime.
\begin{lem} \label{fractionalcoeffs}
Let $n,t$ be integers so that $0\le n < p^t$. Then 
\begin{equation*}
\binom{\pm 1/2}{n}\equiv \binom{(p^t\pm 1)/2}{n}\bmod p.
\end{equation*}
\end{lem}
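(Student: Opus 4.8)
The plan is to read the (fractional) binomial coefficients off binomial power series and to exploit the Frobenius identity $(1+x)^{p^t}\equiv 1+x^{p^t}\bmod p$ together with the uniqueness of square roots of a power series over $\ff{p}$ whose constant term equals $1$.

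As preliminaries, I would first check that all four quantities lie in $\mathbb{Z}_{(p)}$, so that reduction modulo $p$ is meaningful: $(p^t\pm1)/2$ is a non-negative integer, hence $\binom{(p^t\pm1)/2}{n}\in\itr$, while for $n\ge 1$ the closed forms $\binom{1/2}{n}=\frac{(-1)^{n-1}}{2\cdot 4^{n-1}}C[n-1]$ and $\binom{-1/2}{n}=\frac{(-1)^{n}}{4^{n}}\binom{2n}{n}$ exhibit denominators that are powers of $2$, hence units in $\mathbb{Z}_{(p)}$ since $p$ is odd (the case $n=0$ being trivial). I then work in $\mathbb{Z}_{(p)}[[x]]$, setting $(1+x)^{\alpha}:=\sum_{n\ge 0}\binom{\alpha}{n}x^{n}$ for $\alpha\in\mathbb{Z}_{(p)}$. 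The Vandermonde identity $\binom{\alpha+\beta}{n}=\sum_{j}\binom{\alpha}{j}\binom{\beta}{n-j}$ is a polynomial identity in $\alpha$ and $\beta$, so $(1+x)^{\alpha}(1+x)^{\beta}=(1+x)^{\alpha+\beta}$; in particular $\big[(1+x)^{\pm 1/2}\big]^{2}=(1+x)^{\pm 1}$ and $(1+x)^{p^t-1}=(1+x)^{p^t}(1+x)^{-1}$.

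The heart of the argument is the mod-$p$ reduction. Write $\pi\colon\mathbb{Z}_{(p)}[[x]]\to\ff{p}[[x]]$ for the coefficientwise reduction and set $h:=\pi\big(\sum_{m\ge 0}\binom{1/2}{m}x^{p^t m}\big)$; substituting $y=x^{p^t}$ into $\big[(1+y)^{1/2}\big]^{2}=1+y$ and applying $\pi$ (using $\pi\big((1+x)^{p^t}\big)=1+x^{p^t}$) gives $h^{2}=1+x^{p^t}$ with $h$ having constant term $1$. For the $+$ case, put $f_{+}=\pi\big((1+x)^{(p^t+1)/2}\big)$ and $g_{+}=\pi\big((1+x)^{1/2}\big)\,h$; then $f_{+}^{2}=\pi\big((1+x)^{p^t+1}\big)=\pi\big((1+x)^{p^t}\big)\pi(1+x)=(1+x^{p^t})(1+x)$ and likewise $g_{+}^{2}=(1+x)(1+x^{p^t})$, so $f_{+}^{2}=g_{+}^{2}$, while $f_{+}$ and $g_{+}$ both have constant term $1$. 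Since $p$ is odd, if $u^{2}=v^{2}$ in $\ff{p}[[x]]$ with $u$, $v$ of constant term $1$, then $u+v$ is a unit and $(u-v)(u+v)=0$ forces $u=v$; hence $f_{+}=g_{+}$. Comparing coefficients of $x^{n}$ for $0\le n<p^t$ in $\pi\big((1+x)^{(p^t+1)/2}\big)=\pi\big((1+x)^{1/2}\big)\,h$ --- on the right-hand side only the $m=0$ term of $h$ contributes in this range --- yields $\binom{(p^t+1)/2}{n}\equiv\binom{1/2}{n}\bmod p$. The $-$ case is identical, with $(1+x)^{1/2}$ replaced by $(1+x)^{-1/2}$ and $(1+x)^{p^t+1}$ by $(1+x)^{p^t-1}=(1+x)^{p^t}(1+x)^{-1}$, so that $f_{-}^{2}=(1+x^{p^t})(1+x)^{-1}=g_{-}^{2}$ and one obtains $\binom{(p^t-1)/2}{n}\equiv\binom{-1/2}{n}\bmod p$.

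The only real obstacle is recognizing that the naive route fails: although $\prod_{i=0}^{n-1}\!\big((p^t\pm1)/2-i\big)\equiv\prod_{i=0}^{n-1}\!\big(\pm 1/2-i\big)\bmod p^t$, dividing through by $n!$ destroys all control as soon as $p\mid n!$, i.e. once $n\ge p$, because $v_{p}(n!)$ can far exceed $t$. The power-series computation builds the required extra cancellation in automatically through $(1+x)^{p^t}\equiv 1+x^{p^t}$ and the uniqueness of square roots; everything else is routine bookkeeping with coefficients. (Alternatively, one could deduce both congruences from a version of Lucas' theorem valid for $p$-adic integers, using that the base-$p$ digit string of $1/2$ is $(p+1)/2$ followed by $(p-1)/2$ repeated, and that of $-1/2$ is $(p-1)/2$ repeated, so that in positions $0,\dots,t-1$ these agree with the digits of $(p^t+1)/2$ and $(p^t-1)/2$ respectively.)
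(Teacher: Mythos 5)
Your proof is correct, but it takes a genuinely different route from the paper. The paper disposes of this lemma in a single sentence, by comparing the falling-factorial expansions factor by factor: the $i$-th factors $\pm 1/2-i$ and $(p^t\pm1)/2-i$ differ by $p^t/2$, so whenever $v_p(\pm1-2i)<t$ the two factors have the same $p$-adic valuation and congruent unit parts, and the common $n!$ divides out. You instead encode both sides as coefficients of $(1+x)^{(p^t\pm1)/2}$ and of $(1+x)^{\pm1/2}h$ in $\ff{p}[[x]]$ and identify the two series via uniqueness of square roots with constant term $1$ (valid since $p$ is odd and $\ff{p}[[x]]$ is a domain). What your approach buys is completeness: the paper's factor-matching sketch silently skips the range $n>(p^t+1)/2$ (resp.\ $n>(p^t-1)/2$), where the integer binomial coefficient vanishes because one factor is $0$ while the corresponding fractional factor has valuation exactly $t$, and one must then separately check that $v_p$ of the fractional binomial coefficient stays positive after dividing by $n!$; your square-root argument handles all $n<p^t$ uniformly, at the cost of more machinery ($\mathbb{Z}_{(p)}[[x]]$, Vandermonde as a polynomial identity, reduction mod $p$). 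Your closing parenthetical about a $p$-adic Lucas theorem --- the digits of $\pm1/2$ agreeing with those of $(p^t\pm1)/2$ in positions $0,\dots,t-1$ --- is essentially the paper's intended argument made precise, and dovetails with the paper's own use of Lucas in \Cref{lemma from lucas}. The individual steps all check out: the closed forms showing $\binom{\pm1/2}{n}\in\mathbb{Z}_{(p)}$, the identity $h^2=1+x^{p^t}$, the cancellation $(u-v)(u+v)=0$ with $u+v$ a unit, and the fact that only the $m=0$ term of $h$ contributes to the coefficient of $x^n$ when $n<p^t$.
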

To see this, it is enough to note that those numbers divisible by $p$ in the
expansion of the left hand side are divisible by the same power of $p$ as the
corresponding number in the expansion of the right hand side.
\begin{lem} \label{fractionalidentity}
Let $n,t$ be integers so that $1\le n < p^t$. Then 
\begin{equation*}
 2\binom{2n-1}{n} \equiv (-4)^n\binom{(p^t-1)/2}{n}\bmod p.
\end{equation*}
\end{lem}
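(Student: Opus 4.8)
The plan is to prove the identity $2\binom{2n-1}{n}\equiv(-4)^n\binom{(p^t-1)/2}{n}\bmod p$ by first establishing it as a genuine identity of rational numbers in a suitable form and then reducing modulo $p$ using \Cref{fractionalcoeffs}. The starting observation is the classical closed form for the central-type fractional binomial coefficient: for the formal variable $1/2$ one has $\binom{-1/2}{n}=(-4)^{-n}\binom{2n}{n}$, and hence $(-4)^n\binom{-1/2}{n}=\binom{2n}{n}$. Since $\binom{2n}{n}=\frac{2n}{n}\binom{2n-1}{n}=2\binom{2n-1}{n}$ for $n\ge 1$, this already gives the rational identity $2\binom{2n-1}{n}=(-4)^n\binom{-1/2}{n}$. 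So the content of the lemma is really just the passage from $\binom{-1/2}{n}$ to $\binom{(p^t-1)/2}{n}$ modulo $p$.

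The key steps, in order, are: (1) verify $(-4)^n\binom{-1/2}{n}=\binom{2n}{n}$ by expanding $\binom{-1/2}{n}=\frac{(-1/2)(-3/2)\cdots(-1/2-n+1)}{n!}=\frac{(-1)^n(1\cdot 3\cdot 5\cdots(2n-1))}{2^n n!}$ and multiplying numerator and denominator by $2^n n! = 2\cdot 4\cdots(2n)$ to recognize $(2n)!/(n!)^2$; (2) rewrite $\binom{2n}{n}=2\binom{2n-1}{n}$ using $\binom{2n}{n}=\binom{2n-1}{n-1}+\binom{2n-1}{n}$ together with the symmetry $\binom{2n-1}{n-1}=\binom{2n-1}{n}$; (3) invoke \Cref{fractionalcoeffs} with the minus sign to replace $\binom{-1/2}{n}$ by $\binom{(p^t-1)/2}{n}$ modulo $p$, which is legitimate precisely because $1\le n<p^t$; (4) combine to obtain the stated congruence.

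The main obstacle, such as it is, is purely bookkeeping with powers of $2$ and signs in step (1): one must be careful that $(-4)^n = (-1)^n 2^{2n}$ exactly cancels the factor $(-1)^n$ and half of the $2$'s so that what remains is $\binom{2n}{n}$ and not some spurious power of $2$. There is no number-theoretic subtlety — $\binom{-1/2}{n}$ is a rational number whose denominator is a power of $2$, hence a $p$-adic unit since $p$ is odd, so all congruences make sense — and \Cref{fractionalcoeffs} does the one nontrivial replacement for us. I would write the argument as a short chain of equalities over $\rat$ (or over the $p$-adic integers) ending with a single application of \Cref{fractionalcoeffs}, rather than manipulating the base-$p$ digits directly.
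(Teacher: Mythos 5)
Your proposal is correct and follows essentially the same route as the paper: both establish the exact rational identity $2\binom{2n-1}{n}=(-4)^n\binom{-1/2}{n}$ (the paper expands $2(2n-1)!/((n-1)!\,n!)$ directly, while you pass through $\binom{2n}{n}$ via Pascal's rule and symmetry — a trivial difference) and then conclude with a single application of Lemma~\ref{fractionalcoeffs}. Your remark that the denominator of $\binom{-1/2}{n}$ is a power of $2$, hence a $p$-adic unit, makes explicit a point the paper leaves implicit, but the argument is the same.
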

\begin{proof}
Note that
\begin{align*}
2 \binom{2n-1}{n}
&= \frac{2 (2n-1)!}{(n-1)!n!}\\
&= \frac{2^n (n-1)! (1)(3)\cdots(2n-1)}{(n-1)!n!}\\
&= \frac{(-4)^n\left(\frac{-1}{2}\right)^n (1)(3)\cdots(2n-1)}{n!}\\
&= \frac{(-4)^n
\left(\frac{-1}{2}\right)\left(\frac{-3}{2}\right)\cdots
\left(\frac{-1}{2}-(n-1)\right)}{n!}\\
&=(-4)^n \binom{-1/2}{n}.
\end{align*}
By Lemma \ref{fractionalcoeffs},
\begin{equation*}
\binom{-1/2}{n} (-4)^n \equiv \binom{(p^t-1)/2}{n} \bmod p.
\end{equation*}
The proof is complete.
\end{proof}
\begin{defn}
Let $n,k\ge 0$ be integers.
\begin{enumerate}[label=(\roman*)]
\item The {\em $n$th Catalan number} $C[n]$ is defined by
$$C[n] = \frac{1}{n+1}\binom{2n}{n}.$$
\item The {\em generalized Catalan number $\GC[n,k]$} is defined by
\begin{equation*}
\GC[n,k]=\binom{2n}{n}\binom{2k}{k}\frac{2k+1}{n+k+1}.
\end{equation*}
\end{enumerate}
\end{defn}
We adopt the convention of $\binom{n}{k}=0$ if $n<k$ or $n<0$ or $k<0$.
The following identity can be derived by considering
the generating function of the Catalan numbers.
\begin{lem}\label{Catalan identity}
Let $n,t$ be integers so that $0\le n<p^t-1$. Then
\begin{equation*}
C[n] \equiv  2 (-4)^n \binom{(p^t+1)/2}{n+1}\bmod p.
\end{equation*}
\end{lem}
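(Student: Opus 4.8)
The plan is to deduce the identity from just two ingredients already available: the closed form of the Catalan generating function, and \Cref{fractionalcoeffs}. First I would recall that $c(X):=\sum_{n\ge 0}C[n]X^n\in\rat[[X]]$ satisfies $c(X)=1+Xc(X)^2$, whence
\[
c(X)=\frac{1-\sqrt{1-4X}}{2X}.
\]
Expanding the square root by the binomial series, $\sqrt{1-4X}=\sum_{n\ge 0}\binom{1/2}{n}(-4)^nX^n$, and substituting, one finds that the coefficient of $X^n$ in $c(X)$ equals $-\tfrac12\binom{1/2}{n+1}(-4)^{n+1}=2(-4)^n\binom{1/2}{n+1}$. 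Comparing with the left-hand side yields the rational identity
\[
C[n]=2(-4)^n\binom{1/2}{n+1}
\]
valid for every $n\ge 0$. (If one prefers to avoid generating functions, the same identity follows directly by the factorial bookkeeping used in the proof of \Cref{fractionalidentity}: write $\binom{1/2}{n+1}=\dfrac{(-1)^n(1)(3)\cdots(2n-1)}{2^{n+1}(n+1)!}$ and substitute $(1)(3)\cdots(2n-1)=\dfrac{(2n)!}{2^nn!}$.)

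It then remains to pass from this identity over $\rat$ to the asserted congruence over $\ff{p}$. Rearranging the displayed identity gives $\binom{1/2}{n+1}=(-1)^n2^{-(2n+1)}C[n]$; since $p$ is odd and $C[n]\in\itr$, this is a rational number with denominator coprime to $p$, so reduction modulo $p$ is legitimate. Now I would invoke \Cref{fractionalcoeffs} with the plus sign and with $n$ there replaced by $n+1$: the hypothesis required is $0\le n+1<p^t$, which is exactly the assumed range $0\le n<p^t-1$. This gives $\binom{1/2}{n+1}\equiv\binom{(p^t+1)/2}{n+1}\bmod p$. Multiplying through by $2(-4)^n$ and using $C[n]=2(-4)^n\binom{1/2}{n+1}$ produces $C[n]\equiv 2(-4)^n\binom{(p^t+1)/2}{n+1}\bmod p$, as claimed.

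The argument is short and I do not expect a genuine obstacle; the two points needing care are (a) tracking the index shift correctly, so that \Cref{fractionalcoeffs} is applied at $n+1$ rather than $n$ — this is precisely why the hypothesis is $n<p^t-1$ and not merely $n<p^t$ — and (b) confirming that the rational identity descends to a congruence modulo $p$, i.e.\ that no factor of $p$ is hidden in a denominator, which is automatic since the only denominators appearing are powers of $2$ (equivalently, this is exactly the content already packaged into \Cref{fractionalcoeffs}).
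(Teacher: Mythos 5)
Your proof is correct and follows essentially the same route as the paper: both establish the rational identity $C[n]=2(-4)^n\binom{1/2}{n+1}$ and then apply \Cref{fractionalcoeffs} at index $n+1$ (which is exactly why the hypothesis is $n<p^t-1$). The paper derives that identity by the same factorial bookkeeping you mention parenthetically rather than via the generating function, but this is a cosmetic difference.
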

\begin{proof}The proof is similar to that of Lemma \ref{fractionalidentity}.
\begin{align*}
C[n]=\frac{1}{n+1}\binom{2n}{n}
=&\frac{1}{n+1}\frac{(1)(3)\cdots(2n-1)}{n!}\frac{(2)(4)(6)\cdots(2n)}{n!}\\
=&\frac{2^n}{n+1}\frac{(1)(3)\cdots(2n-1)}{n!}\\
=&\frac{4^n}{n+1}\frac{(\frac12)(\frac32)\cdots(\frac{2n-1}{2})}{n!}\\
=&(2)(-4)^n\frac{(\frac12)(\frac{-1}{2})(\frac{-3}{2})\cdots(\frac{1}{2}-n)}{(n+1)!}\\
=&2(-4)^n\binom{1/2}{n+1}.
\end{align*}
The result now follows from Lemma \ref{fractionalcoeffs} as $n+1<p^t$.
\end{proof}
We will be interested in an equivalence modulo $p$ between Catalan
numbers and generalized Catalan numbers. To this end, we first prove an
identity.
\begin{lem} \label{catidentity}
For $n\ge 0$ and $k\ge 1$, we have
\begin{equation*}
\GC[n,k] - \GC[n+1,k-1] = 2 \binom{2k-1}{k} C[n].
\end{equation*}
\end{lem}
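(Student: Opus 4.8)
The plan is to expand both generalized Catalan numbers using the definition $\GC[n,k]=\binom{2n}{n}\binom{2k}{k}\frac{2k+1}{n+k+1}$ and show that after clearing denominators the difference collapses to the stated product. First I would write
\begin{equation*}
\GC[n,k] = \binom{2n}{n}\binom{2k}{k}\frac{2k+1}{n+k+1},
\qquad
\GC[n+1,k-1] = \binom{2n+2}{n+1}\binom{2k-2}{k-1}\frac{2k-1}{n+k+1},
\end{equation*}
noting that both terms share the common denominator $n+k+1$. The key simplification is to re-express $\binom{2n+2}{n+1}$ in terms of $\binom{2n}{n}$ via $\binom{2n+2}{n+1} = \frac{(2n+2)(2n+1)}{(n+1)^2}\binom{2n}{n} = \frac{2(2n+1)}{n+1}\binom{2n}{n}$, and similarly $\binom{2k-2}{k-1} = \frac{k}{2(2k-1)}\binom{2k}{k}$ (using $\binom{2k}{k} = \frac{(2k)(2k-1)}{k^2}\binom{2k-2}{k-1}$). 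Substituting these converts both terms into a common factor $\binom{2n}{n}\binom{2k}{k}$ times a rational function, and the goal becomes the purely rational identity
\begin{equation*}
\frac{2k+1}{n+k+1} - \frac{2(2n+1)}{n+1}\cdot\frac{k}{2(2k-1)}\cdot\frac{2k-1}{n+k+1}
= \frac{2\binom{2k-1}{k}}{\binom{2n}{n}\binom{2k}{k}}\,C[n].
\end{equation*}

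Next I would simplify the left side over the common denominator $(n+1)(n+k+1)$: the numerator becomes $(2k+1)(n+1) - k(2n+1) = 2kn + 2k + n + 1 - 2kn - k = k + n + 1 = n+k+1$, so the left side reduces to exactly $\frac{1}{n+1}$. For the right side, I would use $C[n] = \frac{1}{n+1}\binom{2n}{n}$ so that $\binom{2n}{n}C[n]/\binom{2n}{n} = C[n]\cdot\frac{n+1}{1}\cdot\frac{1}{n+1}$ — more directly, the right side equals $\frac{2\binom{2k-1}{k}}{\binom{2k}{k}}\cdot\frac{C[n]}{\binom{2n}{n}} = \frac{2\binom{2k-1}{k}}{\binom{2k}{k}}\cdot\frac{1}{n+1}$. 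Thus the whole identity reduces to showing $2\binom{2k-1}{k} = \binom{2k}{k}$, which is the standard fact $\binom{2k}{k} = \binom{2k-1}{k} + \binom{2k-1}{k-1} = 2\binom{2k-1}{k}$ by Pascal's rule and the symmetry $\binom{2k-1}{k} = \binom{2k-1}{k-1}$.

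The argument is entirely elementary manipulation of binomial coefficients, so there is no serious obstacle; the only point requiring a little care is the bookkeeping of which quantities are being divided by which — in particular keeping track that the single shared denominator $n+k+1$ makes the subtraction clean, and that the factor $\binom{2k-1}{k}$ on the right is precisely what is needed to absorb the difference between $\binom{2k}{k}$ and its "halved" form. One should also note the edge case $k=1$: there $\GC[n+1,0] = \binom{2n+2}{n+1}\cdot 1\cdot\frac{1}{n+2}$, which is consistent with the above since $\binom{0}{0}=1$ and the convention $\binom{2k-2}{k-1}=\binom{0}{0}$ applies. With these checks the identity follows immediately.
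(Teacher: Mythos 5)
Your proposal is correct and follows essentially the same route as the paper: both expand the definition of $\GC$, pull out a common binomial factor (you use $\binom{2n}{n}\binom{2k}{k}$, the paper uses $\binom{2n}{n}\binom{2k-2}{k-1}$), and reduce to the rational computation $(2k+1)(n+1)-k(2n+1)=n+k+1$. The only cosmetic difference is that you finish with the identity $\binom{2k}{k}=2\binom{2k-1}{k}$, whereas the paper absorbs that factor earlier; the garbled sentence about $C[n]\cdot\frac{n+1}{1}\cdot\frac{1}{n+1}$ should be deleted since the correct version immediately follows it.
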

\begin{proof}
The proof is standard algebraic manipulation. We have
\begin{align*}
\GC[n,k] - \GC[n+1,k-1]
&= \binom{2n}{n}\binom{2k}{k} \frac{2k+1}{n+k+1}
-  \binom{2n+2}{n+1}\binom{2k-2}{k-1} \frac{2k-1}{n+k+1}\\
&=\frac{1}{n+k+1}\binom{2n}{n}\binom{2k-2}{k-1}
\left(\frac{(2k+1)2k(2k-1)}{k^2} - \frac{(2n+2)(2n+1)(2k-1)}{(n+1)^2}\right)\\
&= \frac{2(2k-1)}{n+k+1}\binom{2n}{n}\binom{2k-2}{k-1}
\left(\frac{2k+1}{k} - \frac{2n+1}{n+1}\right)\\
&= \frac{2(2k-1)}{n+k+1}\binom{2n}{n}\binom{2k-2}{k-1}
\left(\frac{n+k+1}{k(n+1)}\right)\\
&= 2 \binom{2n}{n}\binom{2k-1}{k}\frac{1}{n+1},
\end{align*}
which proves the claim.
\end{proof}
\begin{lem}\label{relating Catalan to gen. Catalan}
Let $0\le n<q-1$ and $0\le k < q$. Then
$$C[kq+n]\equiv\GC[n,k]-\GC[n+1,k-1]\bmod p.$$
\end{lem}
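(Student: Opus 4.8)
The plan is to reduce this to the one-variable Catalan identities already established. First I would dispose of the trivial case $k=0$: by the convention on binomial coefficients $\GC[n+1,-1]=0$ (the factor $\binom{-2}{-1}$ vanishes), while $\GC[n,0]=\frac{1}{n+1}\binom{2n}{n}=C[n]$, so both sides reduce to $C[n]$ and there is nothing to prove. Hence I may assume $k\ge 1$.

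For $k\ge1$, Lemma~\ref{catidentity} shows the right-hand side equals (as an integer) $2\binom{2k-1}{k}C[n]$, so it suffices to prove
\[
C[kq+n]\equiv 2\binom{2k-1}{k}\,C[n]\bmod p.
\]
To unpack the left side, I note $0\le kq+n\le q^2-2<Q-1$, so Lemma~\ref{Catalan identity} with $p^t=Q$ gives $C[kq+n]\equiv 2(-4)^{kq+n}\binom{(Q+1)/2}{kq+n+1}\bmod p$. Since $0\le k<q$ and $1\le n+1\le q-1<q$, the integer $kq+(n+1)$ has base-$q$ digits $k$ and $n+1$, so Lemma~\ref{lemma from lucas} yields $\binom{(Q+1)/2}{kq+n+1}\equiv\binom{(q-1)/2}{k}\binom{(q+1)/2}{n+1}\bmod p$. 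Finally, as $q=p^e\equiv1\bmod(p-1)$ and $p$ is odd (so $-4$ is a unit mod $p$), Fermat's little theorem gives $(-4)^{kq+n}\equiv(-4)^{k+n}\bmod p$. Combining these,
\[
C[kq+n]\equiv\Bigl[(-4)^k\binom{(q-1)/2}{k}\Bigr]\cdot\Bigl[2(-4)^n\binom{(q+1)/2}{n+1}\Bigr]\bmod p.
\]

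Now I would identify the two bracketed factors via the earlier lemmas: Lemma~\ref{fractionalidentity} with $p^t=q$ (using $1\le k<q$) gives $(-4)^k\binom{(q-1)/2}{k}\equiv 2\binom{2k-1}{k}\bmod p$, and Lemma~\ref{Catalan identity} with $p^t=q$ (using $0\le n<q-1$) gives $2(-4)^n\binom{(q+1)/2}{n+1}\equiv C[n]\bmod p$. Substituting, $C[kq+n]\equiv 2\binom{2k-1}{k}C[n]\bmod p$, which is exactly the required congruence, and combining with Lemma~\ref{catidentity} finishes the proof.

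I do not expect any serious obstacle once the preceding lemmas are in hand; the only point demanding attention is the bookkeeping on index ranges (checking $kq+n<Q-1$, $0\le k<q$, $1\le n+1<q$, $0\le n<q-1$, $1\le k<q$ so that Lemmas~\ref{Catalan identity}, \ref{lemma from lucas} and \ref{fractionalidentity} genuinely apply), together with the observation that the convention $\binom{n}{k}=0$ forces the separate---but immediate---treatment of $k=0$.
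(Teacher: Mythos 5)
Your proposal is correct and follows essentially the same route as the paper: dispose of $k=0$ via the convention $\GC[n+1,-1]=0$, invoke Lemma~\ref{catidentity} to reduce to $C[kq+n]\equiv 2\binom{2k-1}{k}C[n]\bmod p$, and then verify this by combining Lemma~\ref{Catalan identity} (with $t=2e$ and $t=e$), Lemma~\ref{lemma from lucas}, and Lemma~\ref{fractionalidentity}. The index-range checks you flag are exactly the ones the paper relies on, so there is nothing to add.
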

\begin{proof} If $k=0$, then
\begin{align*}
\GC[n,0]-\GC[n+1,-1]&=\GC[n,0]=\binom{2n}{n}\frac{1}{n+1}=C[n].
\end{align*}
For the remainder, suppose $k\ge 1$. 
In light of Lemma \ref{catidentity}, we need to prove
\begin{equation*}
C[kq+n]\equiv 2 \binom{2k-1}{k} C[n]\bmod p.
\end{equation*}
Appealing to Lemma \ref{Catalan identity} with $t=2e$ and $t=e$, and
Lemma \ref{lemma from lucas}, we have
\begin{align*}
C[kq+n] - 2 \binom{2k-1}{k} C[n]
&\equiv 2(-4)^{kq+n} \binom{(Q+1)/2}{kq+n+1} 
- 4 (-4)^n \binom{2k-1}{k}\binom{(q+1)/2}{n+1}\bmod p\\
&\equiv 2 (-4)^n\binom{(q+1)/2}{n+1}
\left((-4)^k\binom{(q-1)/2}{k} - 2\binom{2k-1}{k}\right)\bmod p\\
&\equiv 2 (-4)^n\binom{(q+1)/2}{n+1} \, N\bmod p,
\end{align*}
where
\begin{equation*}
N = (-4)^k\binom{(q-1)/2}{k} - 2\binom{2k-1}{k}.
\end{equation*}
As $1\le k<q$, Lemma \ref{fractionalidentity} now shows $N\equiv 0\bmod p$,
proving the lemma.
\end{proof}

\begin{lem}\label{catalan 0 condition}
Let $i,j$ be integers with $0\le i < i+1 < j\le (q-1)/2$.
Then $C[j(q-1)+i]\equiv 0\bmod p$.
\end{lem}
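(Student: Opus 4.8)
The plan is to combine two of the identities established above: \Cref{Catalan identity}, which rewrites a Catalan number mod $p$ as a unit times a half-integer binomial coefficient, and \Cref{lemma from lucas}, which splits $\binom{(Q+1)/2}{\cdot}$ into a product over the two base-$q$ digits of the lower argument. The strategy is to show that the resulting lower-digit binomial coefficient is simply $0$ by the $\binom{n}{k}=0$ convention, which forces the whole expression to vanish mod $p$.

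First I would set $n=j(q-1)+i$ and record the numerology forced by $0\le i<i+1<j\le(q-1)/2$: in particular $j\ge 2$, and $n<Q-1$ since $n\le\tfrac{q-1}{2}(q-1)+\tfrac{q-1}{2}<q^2-1$. Hence \Cref{Catalan identity} applies with $t=2e$ (recall $q=p^e$ and $Q=q^2=p^{2e}$), giving
\[
C[j(q-1)+i]\equiv 2(-4)^{n}\binom{(Q+1)/2}{\,n+1\,}\bmod p.
\]

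Next I would carry out a single base-$q$ split of $n+1$: write $n+1=j(q-1)+i+1=(j-1)q+(q-j+i+1)$. The hypothesis $i+1<j$ is exactly what makes $0\le q-j+i+1<q$, and $j\le(q-1)/2<q$ makes $0\le j-1<q$, so both digits lie in $[0,q)$ and \Cref{lemma from lucas} applies:
\[
\binom{(Q+1)/2}{\,n+1\,}\equiv\binom{(q-1)/2}{\,j-1\,}\binom{(q+1)/2}{\,q-j+i+1\,}\bmod p.
\]

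The crux — and really the only substantive step — is the inequality $q-j+i+1>(q+1)/2$, equivalently $(q-1)/2>j-i-1$, which holds because $j-i-1\le j-1\le(q-1)/2-1$. Thus the lower-digit coefficient $\binom{(q+1)/2}{q-j+i+1}$ is $0$ by convention (its top entry $(q+1)/2$ is an integer, strictly smaller than the bottom entry), so $\binom{(Q+1)/2}{n+1}\equiv 0\bmod p$ and therefore $C[j(q-1)+i]\equiv 0\bmod p$. I do not anticipate any genuine obstacle here; the proof is pure index arithmetic, and the only place demanding attention is verifying that the single-digit split of $n+1$ is legitimate and that the hypotheses of \Cref{Catalan identity} and \Cref{lemma from lucas} are met — all immediate consequences of $i+1<j\le(q-1)/2$.
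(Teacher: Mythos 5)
Your proof is correct and follows essentially the same route as the paper's: apply \Cref{Catalan identity} to reduce to $\binom{(Q+1)/2}{j(q-1)+i+1}$, split $j(q-1)+i+1=(j-1)q+(q+1+i-j)$, invoke \Cref{lemma from lucas}, and observe that the second factor vanishes because its lower entry exceeds $(q+1)/2$. The only cosmetic difference is that you kill $\binom{(q+1)/2}{q-j+i+1}$ directly by the convention for $n<k$, while the paper cites Lucas' theorem at that point; the index arithmetic is identical.
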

\begin{proof}
Since $p$ is odd, Lemma \ref{Catalan identity} shows $C[n]\equiv 0\bmod p$ if and
only if $\binom{(Q+1)/2}{n+1}\equiv 0 \bmod p$.
Therefore, it is sufficient to show $\binom{(Q+1)/2}{j(q-1)+i+1}\equiv 0\bmod p$.
Now,
\begin{align*}
\binom{(Q+1)/2}{j(q-1)+i+1} &=\binom{(Q+1)/2}{(j-1)q+q+1+i-j}\\
&\equiv \binom{(q-1)/2}{j-1}\binom{(q+1)/2}{q+1+i-j}\bmod p,
\end{align*}
by Lemma \ref{lemma from lucas}.
Under our hypotheses, $-(q-1)/2\le i-j \le -2$, so that
$(q+3)/2\le q+1+i-j\le q-1$. Thus,
$$\binom{(q+1)/2}{q+1+i-j}\equiv 0\bmod p$$
by Lemma \ref{lucas}, which proves the statement.
\end{proof}

\section{Determining the PTR polynomial} \label{ptrpoly}

The key to writing the PTR function $T$ as a polynomial $T\in\ffxyz{Q}$ is in
recognizing the role of the polynomial $\phi_k$ in the function's behavior.
Two other polynomials will also play an important role.
The first is the polynomial $t_n(X)=X^n-X$ for $n\in\nat$.
This polynomial, especially the
case $n=q$, has been shown to play an important role in the description of
PTR polynomials for semifield planes, see Coulter, Henderson and
Kosick \cite{CHK-2007-ppfcs}, Theorem 3.1 in particular.
It also appears in the PTR polynomials
for the order 16 planes that Joshi determined in \cite{J-2023-asopp}.
Usually, the existence of the polynomials $t_q(X), t_q(Y), t_q(Z)$ in the
PTR polynomial of a projective plane of order $q^n$ for some $n\in\nat$ will
point to the existence of a classical subplane of order $q$, though this
depends on how well the coordinatisation that generated the PTR has been
carried out.
Consequently, given the Hughes plane contains a classical subplane of order
$q$, it is no surprise it appears again in our results.
The second polynomial is the well known trace map from $\ff{Q}$ to $\ff{q}$:
$\Tr(X)=X^q+X$.

Before we begin working on the PTR polynomial, we first prove an identity
regarding the polynomial $t_q(X)$.
\begin{lem}\label{reduce t_q}
Let $i,j$ be non-negative integers with $i>0$.
We have $t_q(X)^{j(q-1)+i}\equiv (-1)^j t_q(X)^i\bmod (X^Q-X)$.
\end{lem}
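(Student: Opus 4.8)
The plan is to reduce the entire statement to the single congruence
\[
t_q(X)^q \equiv -t_q(X) \bmod (X^Q-X),
\]
and then run a short induction on $j$. First I would verify this base congruence by a direct computation: since $Q=q^2$ and $X^Q\equiv X\bmod(X^Q-X)$,
\[
t_q(X)^q=(X^q-X)^q=X^{q^2}-X^q=X^Q-X^q\equiv X-X^q=-(X^q-X)=-t_q(X)\bmod (X^Q-X).
\]
(Equivalently, one can argue pointwise over $\ff{Q}$: for every $x\in\ff{Q}$ the element $y=t_q(x)=x^q-x$ satisfies $y^q=x^{q^2}-x^q=x-x^q=-y$, and two polynomials are congruent modulo $X^Q-X$ precisely when they agree as functions on $\ff{Q}$.)

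From this base congruence, for any integer $i\ge 1$ one gets
\[
t_q(X)^{(q-1)+i}=t_q(X)^q\cdot t_q(X)^{i-1}\equiv -t_q(X)\cdot t_q(X)^{i-1}=-t_q(X)^i\bmod (X^Q-X);
\]
that is, absorbing one block of $q-1$ extra factors of $t_q(X)$ flips the sign, provided at least one factor of $t_q(X)$ is left over. Now I would induct on $j$. The case $j=0$ is trivial. Assuming the claim for $j-1$ (for every positive exponent offset), note that the offset $(j-1)(q-1)+i$ is $\ge i\ge 1$, so the displayed sign-flip identity applies to it, and, writing $(q-1)+\bigl((j-1)(q-1)+i\bigr)=j(q-1)+i$,
\[
t_q(X)^{j(q-1)+i}\equiv -\,t_q(X)^{(j-1)(q-1)+i}\equiv -(-1)^{j-1}t_q(X)^i=(-1)^j t_q(X)^i\bmod (X^Q-X),
\]
the middle step being the induction hypothesis. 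This finishes the proof.

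There is no serious obstacle; the only point demanding a little care is the exponent bookkeeping, namely ensuring that the sign-flip step is invoked only when at least one factor of $t_q(X)$ remains. This is exactly why the hypothesis $i>0$ is imposed: when $i=0$ the statement already fails, since $t_q(X)^{q-1}$ equals $0$ on $\ff{q}$ and $-1$ off $\ff{q}$ as a function on $\ff{Q}$, hence is not congruent to any scalar multiple of $1$ modulo $X^Q-X$.
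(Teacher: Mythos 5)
Your proof is correct and rests on exactly the same key congruence as the paper's, namely $t_q(X)^q = X^Q - X^q \equiv -t_q(X) \bmod (X^Q-X)$. The only difference is in the bookkeeping: the paper applies this identity $j$ times at once by evaluating pointwise, writing $t_q(x)^{j(q-1)+i} = (t_q(x)^q)^j\, t_q(x)^{i-j}$ (which uses a possibly negative exponent for $x\notin\ff{q}$ and handles $x\in\ff{q}$ separately), whereas you absorb one block of $q-1$ at a time by induction on $j$, staying entirely at the level of polynomial congruences --- a mild stylistic difference, not a different method.
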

\begin{proof}
Observe that $t_q(X)^q=X^Q-X^q\equiv -t_q(X)\bmod (X^Q-X)$.
Clearly, if $x\in\ff{q}$, then $t_q(x)=0$. For all
$x\in\ff{Q}\setminus\ff{q}$, we have
\begin{align*}
t_q(x)^{j(q-1)+i}&= \left(t_q(x)^q \right)^jt_q(x)^{i-j}\\
&= (-1)^jt_q(x)^jt_q(x)^{i-j}\\
&=(-1)^jt_q(x)^i.
\end{align*}
Thus, for all $x\in\ff{Q}$, we have $t_q(x)^{j(q-1)+i} = (-1)^j t_q(x)^i$, from
which the polynomial identity follows.
\end{proof}

We now move to determine a PTR polynomial for the Hughes planes. We begin by
producing a functional relation between the PTR and $\phi_k$.
\begin{lem} \label{functionform}
For all $x,y,z\in\ff{Q}$, we have
\begin{equation} \label{sigmaT}
T(x,y,z) = z + 2^{-1}\left(x\Tr(y) - t_q(y)\sigma(x,y,z)\right),
\end{equation}
where
\begin{equation*}
\sigma(x,y,z) = 
\begin{cases}
0 & \text{if $y\in \ff{q}$,} \\
\phi_k(x) &\text{if $y\notin\ff{q}$, with $k=t_q(z)/t_q(y)$.}
\end{cases} 
\end{equation*}
\end{lem}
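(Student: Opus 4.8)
The plan is to verify the identity \eqref{sigmaT} directly by comparing it, case by case, with the piecewise description \eqref{piecewiseT}; since both formulas split according to whether $y\in\ff{q}$ and (when $y\notin\ff{q}$) according to whether $x+k$ is a square, the proof amounts to checking three matching cases and performing the elementary algebra in each.

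First I would dispose of the case $y\in\ff{q}$. Here $\Tr(y)=y^q+y=2y$, $t_q(y)=y^q-y=0$, and $\sigma(x,y,z)=0$ by definition, so the right-hand side of \eqref{sigmaT} collapses to $z+2^{-1}\cdot 2xy=z+xy$, which is precisely the first line of \eqref{piecewiseT}.

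Next, assume $y\notin\ff{q}$ and let $(k,k')\in\ff{q}\times\ff{q}$ be the unique pair with $z=ky+k'$. Applying the Frobenius map and using $k,k'\in\ff{q}$ gives $z^q=ky^q+k'$, hence $t_q(z)=z^q-z=k(y^q-y)=k\,t_q(y)$; since $y\notin\ff{q}$ we have $t_q(y)\ne 0$, so $k=t_q(z)/t_q(y)$. This identifies the $k$ occurring in the definition of $\sigma(x,y,z)$ with the one in \eqref{piecewiseT}, so I may substitute $\sigma(x,y,z)=\phi_k(x)$ and evaluate via Lemma~\ref{philemma}(ii). If $x+k\in\square_Q$, then $\phi_k(x)=x$, and the right-hand side of \eqref{sigmaT} becomes $z+2^{-1}\bigl(x\Tr(y)-t_q(y)x\bigr)=z+2^{-1}\bigl(xy^q+xy-xy^q+xy\bigr)=z+xy$, matching the second line. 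If $x+k\in\nonsquare_Q$, then $\phi_k(x)=-x-2k$, and a short expansion yields $z+2^{-1}\bigl(x\Tr(y)+t_q(y)(x+2k)\bigr)=z+xy^q+k\,t_q(y)$; using the identity $k\,t_q(y)=t_q(z)=z^q-z$ derived above, this simplifies to $xy^q+z^q$, matching the third line.

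I do not expect any genuine obstacle here: the argument is a routine case verification. The only two points that require a moment's care are the derivation $k=t_q(z)/t_q(y)$, which is what links the abstract coordinate description to the polynomial data and justifies the formula for $k$ in $\sigma$, and the reuse of $k\,t_q(y)=z^q-z$ at the end of the non-square subcase to absorb the stray term and recover $xy^q+z^q$. With these observations in place, the three cases of \eqref{sigmaT} agree term-by-term with those of \eqref{piecewiseT}, which proves the lemma.
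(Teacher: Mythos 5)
Your proof is correct and follows essentially the same route as the paper's: a direct case-by-case comparison with \eqref{piecewiseT}, using $t_q(z)=k\,t_q(y)$ to identify $k$ and Lemma~\ref{philemma}(ii) to evaluate $\phi_k(x)$ in the two subcases. The algebra in each case matches the paper's computation.
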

\begin{proof}
The proof involves simply checking that the values of the form given coincide
with \Cref{piecewiseT}. Let $x,y,z\in\ff{Q}$ and set
$f:\ff{Q}^3\rightarrow\ff{Q}$ to be the function
$$f(x,y,z)= z + 2^{-1}\left(x\Tr(y) - t_q(y)\sigma(x,y,z)\right).$$

If $y\in \ff{q}$, then $t_q(y)=0$ and $\Tr(y)=2y$, and we have
$f(x,y,z)=z+xy$, as required.

For the remainder, suppose $y\notin \ff{q}$ and write $z=ky+k'$ with
$k,k'\in\ff{q}$. We note that
$$t_q(z)=t_q(ky+k')=t_q(ky) + t_q(k') = k\,t_q(y) +0 = k\, t_q(y).$$
As $y\notin\ff{q}$, $t_q(y)\ne 0$, and so $k=t_q(z)/t_q(y)$.
We thus have
\begin{equation*}
f(x,y,z) = z + 2^{-1}\left(x\Tr(y) - t_q(y)\sigma(x,y,z)\right)
 = z + 2^{-1}\left(x\Tr(y) - t_q(y)\phi_k(x)\right).
\end{equation*}
If $x+k$ is square, then $\phi_k(x)=x$ and we find
\begin{equation*}
f(x,y,z)
= z + 2^{-1}\left(x(y^q+y) - (y^q-y)x\right)
= z + xy = T(x,y,z).
\end{equation*}
If $x+k$ is non-square, then $\phi_k(x)=-x-2k$ and 
\begin{align*}
f(x,y,z) &= z + 2^{-1}\left(x\Tr(y) - t_q(y)\phi_k(x)\right)\\
&= z + 2^{-1}\left(x\Tr(y) + t_q(y)
\left(x + 2\frac{t_q(z)}{t_q(y)}\right)\right)\\
&= z + 2^{-1}\left(x(y^q+y) + x(y^q-y) + 2(z^q-z)\right)\\
&= z + xy^q + z^q -z\\
&=xy^q + z^q\\
&=T(x,y,z),
\end{align*}
completing the proof.
\end{proof}
Thanks to Lemma \ref{functionform}, since we know polynomial forms for $\Tr$ and
$t_q$, to produce a non-reduced polynomial form of the PTR polynomial, we need
a polynomial form of the function $\sigma$. 
\begin{lem} \label{phiklem}
A polynomial $\sigma\in\ffxyz{Q}$ which evaluates as the function $\sigma$
given in Lemma \ref{functionform} is
\begin{equation*}
\sigma(X,Y,Z)= t_q(Y)^{Q-1}
\left(X^{(Q+1)/2} +
\sum_{m=1}^{(Q-1)/2}\binom{(Q+1)/2}{m}X^m t_q(Y)^{m-1}t_q(Z)^{Q-m}\right).
\end{equation*}
\end{lem}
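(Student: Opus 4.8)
The plan is a direct evaluation: I fix an arbitrary $(x,y,z)\in\ff{Q}^3$, evaluate the displayed polynomial, and check that it agrees with the two-case function $\sigma$ of Lemma~\ref{functionform}. The outer factor $t_q(Y)^{Q-1}$ performs the case split for us. Since $t_q(y)=y^q-y$ vanishes precisely when $y\in\ff{q}$, and otherwise $t_q(y)\in\ffs{Q}$, this factor evaluates to $0$ when $y\in\ff{q}$ (matching $\sigma=0$ there) and to $1$ when $y\notin\ff{q}$. So it remains to treat $y\notin\ff{q}$, where we may drop the outer factor and must land on $\phi_k(x)$ with $k=t_q(z)/t_q(y)$.

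Assume $y\notin\ff{q}$, so $t_q(y)\ne 0$. As computed in the proof of Lemma~\ref{functionform}, $k:=t_q(z)/t_q(y)$ lies in $\ff{q}$ and $t_q(z)=k\,t_q(y)$. Substituting this into the $m$-th summand and using $t_q(y)^{Q-1}=1$, each term collapses to
\begin{equation*}
t_q(y)^{m-1}t_q(z)^{Q-m}=k^{Q-m}\,t_q(y)^{Q-1}=k^{Q-m},
\end{equation*}
so the bracketed expression evaluates to $x^{(Q+1)/2}+\sum_{m=1}^{(Q-1)/2}\binom{(Q+1)/2}{m}x^m k^{Q-m}$.

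The one external ingredient is elementary: every nonzero element of $\ff{q}$ is a square in $\ff{Q}$, i.e.\ $k^{(Q-1)/2}=1$ for $k\in\ffs{q}$, because $q-1$ divides $(Q-1)/2=(q-1)(q+1)/2$ (recall $q$ is odd). Hence for every $k\in\ff{q}$ and every $m$ with $1\le m\le(Q-1)/2$ --- so that the exponents $(Q+1)/2-m$ and $Q-m$ are positive --- we have $k^{Q-m}=k^{(Q+1)/2-m}$ and $k^{(Q+1)/2}=k$, the $k=0$ cases being immediate. Re-indexing, and recognising the standalone $x^{(Q+1)/2}$ as the $m=(Q+1)/2$ term, the evaluated bracket becomes
\begin{equation*}
\sum_{m=1}^{(Q+1)/2}\binom{(Q+1)/2}{m}x^m k^{(Q+1)/2-m}
=(x+k)^{(Q+1)/2}-k^{(Q+1)/2}
=(x+k)^{(Q+1)/2}-k
=\phi_k(x),
\end{equation*}
by the binomial theorem. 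Multiplying back by $t_q(y)^{Q-1}=1$ yields $\sigma(x,y,z)=\phi_k(x)$, as required.

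I do not foresee a genuine obstacle --- the argument is a verification rather than a discovery --- but the exponent bookkeeping deserves care: one must keep $1\le m\le(Q-1)/2$ so that every exponent stays strictly positive (making the reduction $k^{Q-m}=k^{(Q+1)/2-m}$ valid even when $k=0$), and one must remember that $\phi_k$ is by definition exactly $(X+k)^{(Q+1)/2}-k$, so the binomial identity lands precisely on it.
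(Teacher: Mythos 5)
Your proof is correct and follows essentially the same route as the paper: a direct evaluation splitting on whether $y\in\ff{q}$, using $k=t_q(z)/t_q(y)\in\ff{q}$ so that $k^{(Q-1)/2}=1$, and then assembling the binomial expansion of $(x+k)^{(Q+1)/2}$ to recover $\phi_k(x)$. If anything, your substitution $t_q(z)=k\,t_q(y)$ before simplifying avoids the negative exponents ($t_q(z)^{1-m}$, $k^{-(m-1)}$) that appear in the paper's chain of equalities and handles the $k=0$ case a bit more cleanly.
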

\begin{proof}
We show the claimed polynomial acts as required under evaluation.

Firstly, if $y\in\ff{q}$, then $t_q(y)=0$, and $\sigma(x,y,z)=0$ is clear.
Now suppose $y\notin\ff{q}$ and set $k=t_q(z)/t_q(y)$.
Note that $k\in\ff{q}$, so that $k^{(Q-1)/2}=1$ and $k^{(Q+1)/2}=k$.
We have
\begin{align*}
\sigma(x,y,z) &= t_q(y)^{Q-1}
\left(x^{(Q+1)/2} +
\sum_{m=1}^{(Q-1)/2}\binom{(Q+1)/2}{m}x^m t_q(y)^{m-1}t_q(z)^{Q-m}\right)\\
&= x^{(Q+1)/2} +
\sum_{m=1}^{(Q-1)/2}\binom{(Q+1)/2}{m}x^m t_q(y)^{m-1}t_q(z)^{1-m}\\
&= x^{(Q+1)/2} +
\sum_{m=1}^{(Q-1)/2}\binom{(Q+1)/2}{m}x^m  k^{-(m-1)}\\
&= x^{(Q+1)/2} +
\sum_{m=1}^{(Q-1)/2}\binom{(Q+1)/2}{m}x^m  k^{(Q+1)/2 - m}\\
&= \left(\sum_{m=0}^{(Q+1)/2}\binom{(Q+1)/2}{m}x^m  k^{(Q+1)/2 - m}\right) - k\\
&=\phi_k(x),
\end{align*}
as required. 
\end{proof}
We can now establish a non-reduced PTR polynomial for the Hughes planes.
\begin{lem}\label{nonreducedPTRpoly}
A non-reduced PTR polynomial for the Hughes plane of order $Q$ over the regular
nearfield is given by 
\begin{equation} \label{nonreducedpoly}
T(X,Y,Z)=M(X,Y)+Z
- 2^{-1} \sum_{m=1}^{(Q-1)/2} \binom{(Q+1)/2}{m}X^m t_q(Y)^m t_q(Z)^{Q-m},
\end{equation}
where $M(X,Y)=XY- 2^{-1} t_{(Q+1)/2}(X) t_q(Y)$.
\end{lem}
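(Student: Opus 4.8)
The plan is to substitute the polynomial form of $\sigma$ from Lemma \ref{phiklem} into the functional identity \eqref{sigmaT} of Lemma \ref{functionform} and then simplify algebraically. Since \eqref{sigmaT} is an equality of \emph{functions} on $\ff{Q}^3$, and Lemma \ref{phiklem} exhibits a polynomial in $\ffxyz{Q}$ that induces the function $\sigma$, the polynomial
$$Z + 2^{-1}\bigl(X(Y^q+Y) - t_q(Y)\,\sigma(X,Y,Z)\bigr)$$
already induces the PTR function $T$ of \eqref{piecewiseT}, hence is a non-reduced PTR polynomial for the Hughes plane. What remains is to massage this into the stated shape \eqref{nonreducedpoly}.

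First I would expand $t_q(Y)\,\sigma(X,Y,Z)$ using the formula of Lemma \ref{phiklem}, which gives
$$t_q(Y)\,\sigma(X,Y,Z) = t_q(Y)^{Q}X^{(Q+1)/2} + \sum_{m=1}^{(Q-1)/2}\binom{(Q+1)/2}{m}X^m t_q(Y)^{Q+m-1}t_q(Z)^{Q-m}.$$
Next I would replace $t_q(Y)^{Q}$ by $t_q(Y)$ and each $t_q(Y)^{Q+m-1}$ by $t_q(Y)^m$. This is legitimate because for every $y\in\ff{Q}$ we have $t_q(y)\in\ff{Q}$, hence $t_q(y)^Q=t_q(y)$ and $t_q(y)^{Q+m-1}=t_q(y)^{Q}t_q(y)^{m-1}=t_q(y)^m$ (equivalently, one may invoke Lemma \ref{reduce t_q} with $j=q+1$ and $i=m$, using that $q+1$ is even); as we only claim a \emph{non-reduced} PTR polynomial, function-level equalities are all that is required.

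The last step is a short computation recognizing $M(X,Y)$. Writing $X(Y^q+Y)=Xt_q(Y)+2XY$, one gets
$$X(Y^q+Y) - t_q(Y)X^{(Q+1)/2} = 2XY - t_q(Y)\bigl(X^{(Q+1)/2}-X\bigr) = 2XY - t_{(Q+1)/2}(X)t_q(Y) = 2M(X,Y),$$
so substituting back into $Z+2^{-1}(\cdots)$ produces exactly
$$T(X,Y,Z) = M(X,Y) + Z - 2^{-1}\sum_{m=1}^{(Q-1)/2}\binom{(Q+1)/2}{m}X^m t_q(Y)^m t_q(Z)^{Q-m},$$
as claimed. I do not expect a genuine obstacle here: the argument is a routine substitution and rearrangement resting entirely on Lemmas \ref{functionform}, \ref{phiklem} and \ref{reduce t_q}. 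The only points demanding care are keeping the factors of $2^{-1}$ straight and being explicit that the power reductions on $t_q(Y)$ are used at the level of induced functions (which suffices for a non-reduced representative), not as polynomial identities modulo $Y^Q-Y$.
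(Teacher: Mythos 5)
Your proof is correct and takes essentially the same approach as the paper: substitute the polynomial from Lemma~\ref{phiklem} into the identity of Lemma~\ref{functionform}, replace $t_q(Y)^Q$ by $t_q(Y)$ (justified at the level of induced functions), and recognize the $M(X,Y)$ term. The paper's version is terser but identical in substance.
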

\begin{proof}
A combination of the previous two lemmas yields the polynomial form 
\begin{equation} \label{longform}
Z + 2^{-1}\left(X\Tr(Y)
- t_q(Y)^Q
\left(X^{(Q+1)/2} +
\sum_{m=1}^{(Q-1)/2}\binom{(Q+1)/2}{m}X^m t_q(Y)^{m-1}t_q(Z)^{Q-m}\right)
\right).
\end{equation}
Since $t_q(Y)^Q\bmod (Y^Q-Y) = t_q(Y)$, one now easily verifies that the
claimed polynomial $T(X,Y,Z)$ comes directly from \Cref{longform} after
replacing $t_q(Y)^Q$ with $t_q(Y)$.
\end{proof}

\subsection{Proof of \Cref{hughesptr}}
To prove \Cref{hughesptr} it remains to reduce the polynomial of
\Cref{nonreducedpoly}  modulo $X^Q-X, Y^Q-Y$ and $Z^Q-Z$.
Since $M(X,Y)$ is already reduced, this means we only need to determine the
reduced form of the sum in \Cref{nonreducedpoly}, which we'll denote by 
$S$. The $X$ terms of this sum are also clearly reduced, so we need to
consider the $Y$ and $Z$ terms.
First, by re-indexing Lemma \ref{nonreducedPTRpoly}, we have 
\begin{equation*}
S= \sum_{m=0}^{(Q-3)/2}\binom{(Q+1)/2}{m+1}X^{m+1}t_q(Y)^{m+1}t_q(Z)^{Q-m-1}.
\end{equation*}
For $0\le m\le (Q-3)/2$, we can write $m=j(q-1)+i$ for unique $i,j$ with
$0\le i\le q-2$ and $0\le j \le (q-1)/2$. Therefore,
\begin{equation*}
S = \sum_{i=0}^{q-2} \sum_{j=0}^{(q-1)/2}
\binom{(Q+1)/2}{j(q-1)+i+1}X^{j(q-1)+i+1}t_q(Y)^{j(q-1)+i+1}t_q(Z)^{Q-(j(q-1)+i)-1}.
\end{equation*}
One checks that $Q-(j(q-1)+i)-1=(q-j)(q-1)+q-i-1$.
Since $i+1>0$ and $q-i-1>0$, we may apply Lemma \ref{reduce t_q} to find
\begin{align*}
t_q(Y)^{j(q-1)+i+1}&\equiv (-1)^jt_q(Y)^{i+1} \bmod (Y^Q-Y), \text{ and}\\
t_q(Z)^{(q-j)(q-1)+q-i-1}&\equiv (-1)^{q-j}t_q(Z)^{q-i-1}\bmod (Z^Q-Z).
\end{align*}
Thus, 
\begin{equation*}
S\bmod (Y^Q-Y,Z^Q-Z) = \sum_{i=0}^{q-2}\sum_{j=0}^{(q-1)/2}
\binom{(Q+1)/2}{j(q-1)+i+1}X^{j(q-1)+i+1}t_q(Y)^{i+1}t_q(Z)^{q-i-1}.
\end{equation*}
To complete the reduction and establish \Cref{hughesptr}, it remains to
consider the coefficients in $2^{-1}S$.

By Lemma \ref{Catalan identity},
$-2^{-1} \binom{(Q+1)/2}{j(q-1)+i+1}
\equiv (-4)^{-(j(q-1)+i+1)}C[j(q-1)+i]\bmod p$.
Since $C[j(q-1)+i]\equiv 0\bmod p$ for $i+1<j<(q-1)/2$ (by
Lemma \ref{catalan 0 condition}) and 
\begin{align*}
(-4)^{j(q-1)+i+1} &= \left((-4)^{q-1}\right)^j(-4)^{i+1}\\
&\equiv(-4)^{i+1}\bmod p,
\end{align*}
\Cref{hughesptr} is established.

We note, in passing, that we have no geometric understanding as to why the
Catalan numbers might appear in a description of the Hughes planes.
We know of no previous result regarding the
Hughes planes, or (for that matter) any other projective plane, where the
Catalan numbers have been shown to be directly connected to the plane in any
way.

\subsection{A version of \Cref{hughesptr} involving generalized Catalan numbers}

While it is easy to see from the form of \Cref{hughesptr} how to extract a
$t_q(Y)$ or $t_q(Z)$ factor from the summation, factoring out a $t_q(X)$ factor
is more difficult. However, doing so reveals an alternate form of the reduced
polynomial where the coefficients are generalized Catalan numbers.
\begin{thm} \label{generalizedcoeffs}
Set $Q=q^2$ and let $T_2\in\ffxyz{Q}$ be the polynomial defined by
\begin{equation*}
T_2(X,Y,Z)
= M(X,Y) + Z + t_q(X)t_q(Y)t_q(Z)\sum_{i=0}^{q-2}h_i(X)t_q(Y)^{i}t_q(Z)^{q-i},
\end{equation*}
where
\begin{equation*}
M(X,Y)=XY-2^{-1}t_{(Q+1)/2}(X)t_q(Y)
\end{equation*}
and
\begin{equation*}
h_i(X) = (-4)^{-(i+1)}\sum_{j=0}^{i}  \GC[i-j,j]X^{j(q-1)+i}.
\end{equation*}
Then $T_2=T$, where $T$ is the polynomial of \Cref{hughesptr}.
\end{thm}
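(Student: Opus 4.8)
The plan is to reduce the claimed three-variable identity to a family of one-variable identities over $\ff p$, one for each $i$, and then to verify those by comparing coefficients against the Catalan and generalized Catalan numbers, using the congruences already assembled in the preliminary section.

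First I would strip away everything common to $T$ and $T_2$: the summand $M(X,Y)+Z$ is literally shared, and $t_q(X),t_q(Y),t_q(Z)$ are shared factors of what remains. Writing the non-trivial part of $T$ (from \Cref{hughesptr}) as $-\sum_{i=0}^{q-2}g_i(X)\,t_q(Y)^{i+1}t_q(Z)^{q-i-1}$, I claim it suffices to prove, for every $i\in\{0,\dots,q-2\}$,
\[
g_i(X) \;=\; -\,t_q(X)\,h_i(X) \qquad \text{in } \ff p[X].
\]
Granting this, substituting into \Cref{hughesptr} and pulling the common factor $t_q(X)t_q(Y)t_q(Z)$ out of $\sum_{i}t_q(X)h_i(X)\,t_q(Y)^{i+1}t_q(Z)^{q-i-1}$ reproduces $T_2$ exactly. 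Note both $g_i$ and $t_q(X)h_i$ have degree $(i+1)q<Q$, so this is a genuine polynomial identity with no reduction involved, and $t_q(X)=X(X^{q-1}-1)$ divides $g_i$ because $g_i(0)=0$ and the exponents appearing in $g_i$ are all congruent to $i+1$ modulo $q-1$.

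To verify $g_i=-t_q(X)h_i$ I would expand $-t_q(X)h_i(X)=(X-X^q)h_i(X)$. Multiplying the monomials $X^{j(q-1)+i}$ ($0\le j\le i$) of $h_i$ by $X$ lands on exponents $j(q-1)+i+1$ for $j=0,\dots,i$, while multiplying by $X^q$ lands on $j(q-1)+i+q=(j+1)(q-1)+(i+1)$, i.e.\ on $j(q-1)+i+1$ for $j=1,\dots,i+1$. These exponents are pairwise distinct over $j\in\{0,\dots,i+1\}$, so matching against $g_i(X)=(-4)^{-(i+1)}\sum_{j=0}^{i+1}C[j(q-1)+i]X^{j(q-1)+i+1}$ reduces the problem to the scalar congruences
\[
C[j(q-1)+i]\equiv \GC[i-j,j]-\GC[i-j+1,j-1]\pmod p,\qquad j=0,1,\dots,i+1,
\]
with a $\GC$ carrying a negative argument read as $0$. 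For $1\le j\le i$ we have $j(q-1)+i=jq+(i-j)$ with $0\le i-j<q-1$ and $1\le j<q$, so this is precisely \Cref{relating Catalan to gen. Catalan}; for $j=0$ it collapses to the definition $C[i]=\GC[i,0]=\tfrac{1}{i+1}\binom{2i}{i}$.

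The hard part will be the top index $j=i+1$, where the right-hand side becomes $-\GC[0,i]=-\binom{2i+1}{i}$ and \Cref{relating Catalan to gen. Catalan} is no longer available (its residue parameter would be $q-1$, outside the admissible range). Here I would argue the two sides separately through the fractional-binomial machinery: \Cref{Catalan identity} with $t=2e$, combined with \Cref{lemma from lucas} and $(-4)^q\equiv-4\pmod p$, gives $C[(i+1)q-1]\equiv 2(-4)^i\binom{(q-1)/2}{i+1}\pmod p$; while \Cref{fractionalidentity} with $t=e$ and $n=i+1$, together with $\binom{2i+1}{i}=\binom{2i+1}{i+1}$, gives $\binom{2i+1}{i}\equiv -2(-4)^i\binom{(q-1)/2}{i+1}\pmod p$. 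Comparing the two settles the endpoint case, hence $g_i=-t_q(X)h_i$ for all $i$, and the substitution above finishes the proof. The only real subtlety throughout this last step is keeping the signs and the powers of $-4$ synchronized across the three lemmas invoked; everything else is routine algebra.
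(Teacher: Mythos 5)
Your proposal is correct and follows essentially the same route as the paper: both reduce the claim to the single-variable identity $g_i(X)=-t_q(X)h_i(X)$, expand $(X-X^q)h_i(X)$ and shift the index so that the middle coefficients are handled by \Cref{relating Catalan to gen. Catalan} (equivalently, by \Cref{catidentity} plus the definition at $j=0$), and then treat the boundary term $-\GC[0,i]=-\binom{2i+1}{i+1}$ separately via \Cref{fractionalidentity}, \Cref{lemma from lucas} and \Cref{Catalan identity}, exactly as the paper does. The only point worth flagging is bookkeeping rather than mathematics: after pulling out the common factor $t_q(X)t_q(Y)t_q(Z)$, the residual power of $t_q(Z)$ in the sum is $q-i-2$, so your final substitution matches the displayed form of $T_2$ only up to that exponent.
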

\begin{proof}
It suffices to show $g_i(X)=-t_q(X)h_i(X)$ for all $0\le i \le q-2$.
Now,
\begin{align*}
-(-4)^{i+1}t_q(X)h_i(X) &= (-4)^{i+1} \left(Xh_i(X) - X^q h_i(X)\right)\\
&=\sum_{j=0}^{i} \GC[i-j,j]X^{jq+i-j+1}
-\sum_{j=0}^{i} \GC[i-j,j]X^{(j+1)q+i-j}\\
&= \sum_{j=0}^{i} \GC[i-j,j]X^{jq+i-j+1}
-\sum_{j=1}^{i+1}  \GC[i-j+1,j-1]X^{jq+i-j+1},
\end{align*}
where in the last step we've simply shifted the index of the second sum by 1.
Note that in the second sum, if $j=0$, then we would have $\GC[i+1,-1]=0$.
Therefore,
\begin{equation*}
-(-4)^{i+1}t_q(X)h_i(X) = 
- \GC[0,i] X^{(i+1)q}
+ \sum_{j=0}^i \left( \GC[i-j,j]-\GC[i-j+1,j-1]\right) X^{jq+i-j+1}.
\end{equation*}
Applying Lemma \ref{relating Catalan to gen. Catalan} with $n=i-j$ and $k=j$ in the
sum, we find
\begin{equation*}
-(-4)^{i+1}t_q(X)h_i(X) = 
-\GC[0,i] X^{(i+1)q} + \sum_{j=0}^{i} C[jq+i-j] X^{jq+i-j+1}.
\end{equation*}
Finally, we note that 
\begin{align*}
-\GC[0,i] &= - \binom{2i}{i}\frac{2i+1}{i+1}\\
&= - \binom{2i+1}{i+1}\\
&= -2^{-1} 2 \binom{2i+1}{i+1}\\
&\equiv -2^{-1} (-4)^{i+1} \binom{(q-1)/2}{i+1}\bmod p
\qquad\text{(by Lemma \ref{fractionalidentity})} \\
&\equiv 2 (-4)^i \binom{(Q+1)/2}{(i+1)q}\bmod p
\qquad\text{(by Lemma \ref{lemma from lucas})} \\
&\equiv 2 (-4)^{qi} (-4)^{q-1} \binom{(Q+1)/2}{iq+q-1 + 1}\bmod p.
\end{align*}
Since $iq+q-1 \le (q-2)q+q-1 < Q-1$, we can appeal to Lemma \ref{Catalan identity}
to obtain
$$2 (-4)^{qi} (-4)^{q-1} \binom{(Q+1)/2}{iq+q-1 + 1}
\equiv C[iq+q-1]\bmod p.$$
Thus,
\begin{align*}
-(-4)^{i+1}t_q(X)h_i(X) &= 
-\GC[0,i] X^{(i+1)q} + \sum_{j=0}^{i} C[jq+i-j] X^{jq+i-j+1}\\
&= C[iq+q-1] X^{(i+1)q} + \sum_{j=0}^{i} C[jq+i-j] X^{jq+i-j+1}\\
&= \sum_{j=0}^{i+1} C[jq+i-j] X^{jq+i-j+1}.
\end{align*}
Therefore $-t_q(X)h_i(X)=g_i(X)$, as required.
\end{proof}

\section{Differential uniformity considerations} \label{dusection}

We end this article by considering the
the differential uniformity of the PPs arising from our PTR polynomial.
\begin{defn}
Let $f\in\ffx{q}$.
\begin{enumerate}[label=(\roman*)]
\item The \emph{uniformity of $f$}, denoted $u(f)$, is defined by
\[u(f):=\max_{b\in\ff{q}} \#\{a\in\ff{q}\,:\, f(a)=b\}.\]

\item For $a\in\ffs{q}$, the {\em differential operator of $f$ in the
direction of $a$} is defined by
\begin{equation*}
\Delta_{f,a}(x):=f(x+a)-f(x).
\end{equation*}
\item The \emph{differential uniformity (DU) of $f$} is defined by 
\begin{equation*}
\delta_{f}:=\max_{a\in\ffs{q}} u(\Delta_{f,a}).  
\end{equation*}
\end{enumerate}
\end{defn}
This concept was introduced by Nyberg \cite{N-1993-dumic} in a more general
setting.
She showed that the lower the DU of a function, the more resistance it offers
against differential cryptanalysis when used in a substitution box.
It is known that
those functions with the optimal DU of 1 cannot be bijections, see Coulter
and Senger \cite{CS-2014-otnod, CS-xxxx-motno},
Most desirable, then, are bijective functions with low, but not optimal DU,
and so we are motivated to consider the DU of the PPs arising from
\Cref{hughesptr}.
Our motivation for examining the DU of the PPs arising from our PTR
polynomial is motivated by results of Bergman and Coulter \cite{BC-2022-cfwld},
who constructed reasonably low DU functions from regular nearfield planes.
Unfortunately, we shall show that the bijective maps arising from the Hughes
planes have quite high DU.

Recall that a polynomial $L\in\ffx{q}$ is a {\em linearized (or additive)}
polynomial if $L(X)=\sum_i c_i X^{p^i}$. Linearized polynomials satisfy
$L(x+y)=L(x)+L(y)$ for all $x,y\in\ff{q}$. Their relevance here is outlined in
the following lemma. We omit the almost trivial proofs.
\begin{lem} \label{simplifying}
Let $f,L\in\ffx{q}$ with $L$ linearized and let $c\in\ff{q}$.
The following statements hold.
\begin{enumerate}[label=(\roman*)]
\item $\delta_{f+L+c}=\delta_f$.
\item If $L$ is a PP, then $\delta_{f(L)}=\delta_{L(f)}=\delta_f$.
\item If $L$ is not a PP, then $\Delta_{f(L),a}(X)$ is a constant polynomial
for any root $a$ of $L$. In particular, $\delta_{f(L)}=q$.
\end{enumerate}
\end{lem}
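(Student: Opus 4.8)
The plan is to verify each of the three parts by a direct computation with the differential operator, exploiting additivity of $L$ throughout. For part (i), I would observe that $\Delta_{f+L+c,a}(x) = (f+L+c)(x+a) - (f+L+c)(x) = \Delta_{f,a}(x) + L(a)$, since the constant $c$ cancels and $L(x+a)-L(x) = L(a)$ by additivity. Thus $\Delta_{f+L+c,a}$ is just a translate of $\Delta_{f,a}$ by the fixed element $L(a)\in\ff{q}$, and translation does not change the preimage-size distribution; hence $u(\Delta_{f+L+c,a}) = u(\Delta_{f,a})$ for every $a\in\ffs{q}$, and taking the max over $a$ gives $\delta_{f+L+c} = \delta_f$.

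For part (ii), suppose $L$ is a PP. For $\delta_{f(L)}$: compute $\Delta_{f(L),a}(x) = f(L(x+a)) - f(L(x)) = f(L(x)+L(a)) - f(L(x))$. Setting $b = L(a)$, which is a nonzero element of $\ff{q}$ since $L$ is a bijection and $a\neq 0$, and setting $y = L(x)$, this equals $\Delta_{f,b}(y)$ composed with the bijection $x\mapsto L(x)$; precomposing with a bijection preserves the preimage-size distribution, so $u(\Delta_{f(L),a}) = u(\Delta_{f,L(a)})$. As $a$ ranges over $\ffs{q}$, $L(a)$ ranges over all of $\ffs{q}$, so the two maxima agree: $\delta_{f(L)} = \delta_f$. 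For $\delta_{L(f)}$: here $\Delta_{L(f),a}(x) = L(f(x+a)) - L(f(x)) = L\bigl(f(x+a) - f(x)\bigr) = L(\Delta_{f,a}(x))$ by additivity. Since $L$ is a bijection, postcomposing $\Delta_{f,a}$ with $L$ again preserves the preimage-size distribution, so $u(\Delta_{L(f),a}) = u(\Delta_{f,a})$ for each $a$, and $\delta_{L(f)} = \delta_f$.

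For part (iii), suppose $L$ is not a PP and let $a$ be a nonzero root of $L$ (such $a$ exists: a non-bijective linearized polynomial over $\ff{q}$ has nontrivial kernel). Then $\Delta_{f(L),a}(x) = f(L(x+a)) - f(L(x)) = f(L(x)+L(a)) - f(L(x)) = f(L(x)) - f(L(x)) = 0$ for all $x$, so $\Delta_{f(L),a}$ is the zero polynomial — a constant. A constant map on $\ff{q}$ takes the value $0$ at all $q$ points, so $u(\Delta_{f(L),a}) = q$; since $\delta_{f(L)}$ is the maximum of these uniformities over all directions and is trivially at most $q$, we get $\delta_{f(L)} = q$.

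None of this presents a genuine obstacle — the only thing to be careful about is the elementary bookkeeping that precomposition or postcomposition with a bijection of $\ff{q}$ leaves the multiset $\{\#\{x : g(x) = b\} : b\in\ff{q}\}$ unchanged, which is what makes $u(\cdot)$ invariant; everything else is a one-line application of additivity of $L$. The statement that a non-PP linearized polynomial has a nonzero root in $\ff{q}$ is standard (its kernel is an $\ff{p}$-subspace of positive dimension), and is exactly what part (iii) needs.
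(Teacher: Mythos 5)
Your proof is correct; the paper itself omits the proofs of this lemma as ``almost trivial,'' and your direct computations with the additivity of $L$ (translation of $\Delta_{f,a}$ by $L(a)$ in (i), pre/post-composition with the bijection $L$ in (ii), and the vanishing of $\Delta_{f(L),a}$ on the nontrivial kernel of $L$ in (iii)) are exactly the intended arguments. No gaps.
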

\begin{cor}
The differential uniformity of each of $T(x,Y,z)$ and $T(x,y,Z)$ over $\ff{Q}$
is $Q$ (maximal).
\end{cor}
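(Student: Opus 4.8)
The plan is to deduce both equalities from Lemma~\ref{simplifying}(iii), after exhibiting, for each fixed pair of coordinates, a decomposition of the resulting univariate polynomial as a linearized polynomial plus a constant plus $f\!\left(t_q(\cdot)\right)$ for some polynomial $f$. The key observation is that $t_q(X)=X^q-X$ is a linearized polynomial over $\ff{Q}=\ff{p^{2e}}$ whose kernel is all of $\ff{q}$, so since $Q=q^2>q$ it is not a permutation polynomial; then Lemma~\ref{simplifying}(iii) forces any $f\!\left(t_q\right)$ to have differential uniformity $Q$, and Lemma~\ref{simplifying}(i) lets us discard linearized and constant summands without changing the differential uniformity.

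For $T(x,Y,z)$ I would fix $x,z\in\ff{Q}$ and read the decomposition straight off \Cref{hughesptr}: the term $M(x,Y)=xY-2^{-1}t_{(Q+1)/2}(x)(Y^q-Y)$ is a linearized polynomial in $Y$ (the coefficient $t_{(Q+1)/2}(x)$ being a constant), the summand $z$ is a constant, and $-\sum_{i=0}^{q-2}g_i(x)\,t_q(z)^{q-i-1}\,t_q(Y)^{i+1}$ is a polynomial in $t_q(Y)$. Thus $T(x,Y,z)=L(Y)+c+\Psi\!\left(t_q(Y)\right)$ with $L$ linearized and $c$ constant, and Lemma~\ref{simplifying}(i) followed by (iii) gives $\delta_{T(x,\cdot,z)}=\delta_{\Psi(t_q)}=Q$.

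For $T(x,y,Z)$ the one wrinkle is the lone $Z$ appearing in \Cref{hughesptr}, which is not a function of $t_q(Z)$. I would eliminate it using the elementary identity $2Z=(Z^q+Z)-(Z^q-Z)$, i.e.\ $Z=2^{-1}\Tr(Z)-2^{-1}t_q(Z)$, so that $T(x,y,Z)=M(x,y)+2^{-1}\Tr(Z)+\Psi\!\left(t_q(Z)\right)$, where $2^{-1}\Tr(Z)$ is linearized in $Z$, $M(x,y)$ is constant, and $\Psi(w)=-2^{-1}w-\sum_{i=0}^{q-2}g_i(x)\,t_q(y)^{i+1}\,w^{q-i-1}$ is a polynomial. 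Again Lemma~\ref{simplifying}(i) then (iii), with $L=t_q$ (not a permutation polynomial of $\ff{Q}$), yields $\delta_{T(x,y,\cdot)}=Q$.

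I do not expect a genuine obstacle here: once the variable of interest enters $T$ only through $t_q$ of that variable, up to a linearized/constant correction, non-surjectivity of $t_q$ immediately forces maximal differential uniformity. The only things requiring (routine) care are checking that every correction term really is linearized or constant in the variable in question — in particular that $xY$ counts as a linearized polynomial in $Y$, and that $\Tr$ and $t_q$ are linearized over $\ff{Q}=\ff{p^{2e}}$ — together with the trivial rewriting $2Z=(Z^q+Z)-(Z^q-Z)$.
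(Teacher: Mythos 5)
Your proposal is correct and follows essentially the same route as the paper: split the univariate polynomial into a constant, a linearized part, and a composition with $t_q$, discard the first two by Lemma~\ref{simplifying}(i), and invoke Lemma~\ref{simplifying}(iii) because $t_q$ has kernel $\ff{q}$ and so is not a permutation polynomial. The only (harmless) extra step is rewriting $Z=2^{-1}\Tr(Z)-2^{-1}t_q(Z)$, which is unnecessary since $Z$ is itself already a linearized polynomial and can be absorbed directly into the linearized summand.
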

\begin{proof}
For $x,y,z$, the polynomials $T(x,Y,z)$ and $T(x,y,Z)$ can be seen to consist
of a constant term, a linear term, and a sum involving a composition with
$t_q(Y)$ or $t_q(Z)$. In either case, Lemma \ref{simplifying} (i) and (ii) reduces
the problem to considering the DU of the sum only, and since each version
involves a composition with a linearized polynomial that is not a PP, the DU is
seen to be maximal by Lemma \ref{simplifying} (iii).
\end{proof}
This leaves us to consider the DU of $T(X,y,z)$.
For any $y,z\in\ff{Q}$, we define $f_{y,z}(X)=T(X,y,z)$.
\begin{lem} \label{duX}
The following statements hold.
\begin{enumerate}[label=(\roman*)]
\item If $y\in\ff{q}$, then $\delta_{f_{y,z}}=Q$.

\item If $y\notin\ff{q}$, then $\delta_{f_{y,z}}=(Q+3)/4$.
\end{enumerate}
\end{lem}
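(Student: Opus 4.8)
The plan is to work directly from the piecewise description of $T$ in \Cref{piecewiseT}, specialising $y$ and $z$, rather than from the polynomial of \Cref{hughesptr}. Fix $y,z\in\ff{Q}$ and set $f=f_{y,z}$. For part (i), when $y\in\ff{q}$ we have $f(x)=xy+z$, which is either a constant (if $y=0$) or an affine bijection; in the bijective case every differential operator $\Delta_{f,a}$ is a nonzero constant polynomial, so $u(\Delta_{f,a})=Q$, giving $\delta_f=Q$. (One can alternatively invoke \Cref{simplifying}(i): $f$ is linear plus constant, so $\delta_f=\delta_0=Q$.)

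For part (ii), suppose $y\notin\ff{q}$, and write $z=ky+k'$ with $k,k'\in\ff{q}$. By \Cref{piecewiseT} (or equivalently by \Cref{functionform} together with \Cref{philemma}(ii)), we have
\begin{equation*}
f(x)=
\begin{cases}
xy+z & \text{if }x+k\in\square_Q,\\
xy^q+z^q & \text{if }x+k\in\nonsquare_Q,
\end{cases}
\end{equation*}
and $f(-k)$ equals the common value $-ky+z=k'$ on the degenerate input. The first key step is to understand $\Delta_{f,a}(x)=f(x+a)-f(x)$ as a function of the two square/nonsquare "types" of $x+k$ and $x+a+k$. There are four cases; on each, $\Delta_{f,a}$ is affine in $x$ with slope in $\{0,\,y-y^q\}$ (up to the $y$ vs.\ $y^q$ coefficients), and since $y\notin\ff{q}$ the slope $y-y^q$ is nonzero. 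So on the two "mixed-type" pieces $\Delta_{f,a}$ is genuinely non-constant, while on the two "same-type" pieces it is constant (with values $ay$ and $ay^q$ respectively). Counting solutions of $\Delta_{f,a}(x)=b$ therefore reduces to counting, for each type pattern, the number of $x$ with $x+k$ and $x+a+k$ of prescribed quadratic characters and with $\Delta_{f,a}(x)=b$ — a problem about the joint distribution of $\eta(u)$ and $\eta(u+a)$ as $u$ ranges over $\ff{Q}$.

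The heart of the argument is the standard character-sum count: for $a\ne 0$, the number of $u\in\ff{Q}$ with $\eta(u)=\varepsilon_1$ and $\eta(u+a)=\varepsilon_2$ is $(Q-2\pm\text{small})/4$ for each choice of signs $(\varepsilon_1,\varepsilon_2)\in\{\pm1\}^2$, via $\sum_u(1+\varepsilon_1\eta(u))(1+\varepsilon_2\eta(u+a))$ and the evaluation $\sum_u\eta(u)\eta(u+a)=-1$. Thus each "same-type" piece contributes a block of about $Q/4$ values of $x$ on which $\Delta_{f,a}$ is constant; the larger of the two constant values $ay$, $ay^q$ will be hit by roughly $Q/4$ preimages, and adjusting for the boundary inputs $u=0,-a$ (where $\eta$ vanishes and $f$ takes its exceptional value) and for whether $b$ also lies in the image of one of the affine mixed-type pieces will pin the maximum down to exactly $(Q+3)/4$. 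I expect the main obstacle to be this bookkeeping: tracking the $O(1)$ corrections coming from the degenerate argument $x=-k$ and from the two values of $u$ at which $\eta(u)\eta(u+a)$ is not $\pm1$, and verifying that the optimal choice of $b$ (which should be $b=ay$ or $b=ay^q$, whichever type-block is larger after the corrections) yields precisely $(Q+3)/4$ rather than $(Q\pm1)/4$; one must also check no value of $b$ coming from the non-constant pieces overtakes this. Once the exact count on each of the four pieces is in hand, maximising over $b$ and then over $a\in\ffs{Q}$ gives $\delta_{f_{y,z}}=(Q+3)/4$, completing the proof.
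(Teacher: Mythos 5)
Your overall strategy is the same as the paper's: part (i) is identical, and for part (ii) both arguments split $\Delta_{f,a}$ into four cases according to the quadratic characters of the two shifted arguments, observe that the two ``same-type'' cases give constant differences while the two ``mixed-type'' cases give affine non-constant differences, and then reduce the problem to counting pairs of prescribed character type. The only structural difference is that the paper first strips $f_{y,z}$ down to $X^{(Q+1)/2}$ using \Cref{functionform} and \Cref{simplifying}(i)--(ii) (the map is, up to additive constants and linearized terms and composition with a nonzero scalar, just $\phi_k$, hence $X^{(Q+1)/2}$), and only then does the four-case analysis; you carry the $y$, $y^q$, $z$, $z^q$ coefficients through the whole computation. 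The paper's normalization is worth adopting: it makes the exceptional checks one-liners (e.g.\ a mixed-type solution of $\Delta=\pm a$ forces $x=-a$ and hence $x+a=0$, which has no quadratic character), whereas in your coordinates the same checks require solving $(x+a)y^q+z^q-xy-z=ay$ and its companion and verifying the solutions $x=-k-a$ and $x=-k$ violate the type constraints.

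The genuine gap is that you never close the $O(1)$ ambiguity, which is exactly where the content of the statement $(Q+3)/4$ (as opposed to $(Q\pm1)/4$) lives. Concretely, for $b=ay$ with $a\in\square_Q$ the preimage count is
$\#\{x : x+k,\,x+a+k\in\square_Q\setminus\{0\}\} + \epsilon_1 + \epsilon_2$,
where $\epsilon_1\in\{0,1,2\}$ counts the degenerate inputs $x=-k$, $x=-k-a$ and $\epsilon_2\in\{0,1,2\}$ counts solutions coming from the two mixed-type pieces. The character-sum evaluation you invoke gives $(Q-5)/4$ for the first term, so you need $\epsilon_1=2$ and $\epsilon_2=0$, and you must also rule out $b=ay^q$ and the case $a\in\nonsquare_Q$ producing something larger; you explicitly defer all of this (``I expect the main obstacle to be this bookkeeping''). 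These checks do work out --- both degenerate inputs contribute when $a\in\square_Q$, both mixed-type equations force $x+k=0$ or $x+a+k=0$ and so contribute nothing, and the nonsquare-$a$ case peaks at $b=ay^q$ with the same total $(Q+3)/4$ --- but as written your argument establishes only $\delta_{f_{y,z}}=(Q+O(1))/4$, not the exact value claimed. The paper resolves this by the explicit Case 2/Case 3 exclusion for $X^{(Q+1)/2}$ together with the exact cardinalities of the sets $K_1(a)$, $K_4(a)$ from Bergman--Coulter--Fain.
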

\begin{proof}
If $y\in\ff{q}$, then $f_{y,z}(X)=yX+z$.
Since every difference operator is a constant for a linear polynomial,
it is clear the DU is $Q$ in this case.

For $y\notin\ff{q}$, set $k=t_q(z)/t_q(y)$. It follows from
Lemma \ref{functionform} that $f_{y,z}(X)$ is equivalent under evaluation to the
polynomial $z+2^{-1} \Tr(y) X - t_q(y) \phi_k(X)$.
Since we may ignore constant and linearized terms, we see
$\delta_{f_{y,z}}=\delta_{\phi_k}$.
Applying Lemma \ref{simplifying} (i) and (ii) now reduces us to determining the DU of
$f(X)=X^{(Q+1)/2}=X\eta(X)$.
Clearly, we have
\begin{equation*}
\Delta_{f,a}(x) =
\begin{cases}
a	&\text{if $x+a\in\square_Q$ and $x\in\square_Q$,}\\
2x+a	&\text{if $x+a\in\square_Q$ and $x\in\nonsquare_Q$,}\\
-2x-a	&\text{if $x+a\in\nonsquare_Q$ and $x\in\square_Q$,}\\
-a	&\text{if $x+a\in\nonsquare_Q$ and $x\in\nonsquare_Q$.}
\end{cases}
\end{equation*}
We'll refer to these cases as Case 1 through 4.
Clearly the number of pre-images of an image will be maximized when we're in
Case 1 or Case 4, and (possibly) with an additional one value coming from
one of Case 3 or Case 2, respectively.
If Case 3 added an additional pre-image for $a$, then $x=-a$,
but then we would also need $x+a=0\notin\nonsquare_Q$, so this case doesn't
occur.
If Case 2 yields an additional pre-image for $-a$, then $x=a$. This would also
require $2a\in\square_Q$. Since $2\in\square_Q$, this would require
$a\in\square_Q$, implying $x\in\square_Q$, a contradiction.
Thus, the DU of $f$ is given by the maximum number of pre-images for $a$ or
$-a$, or put another way, the largest of the cardinalities over all
$a\in\ffs{Q}$ of the sets $K_1(a)$ and $K_4(a)$ given by
\begin{align*}
K_1(a) &=\{ x\in\ff{Q}\,:\, x\in\square_Q \land x+a\in\square_Q\},\\
K_4(a) &=\{ x\in\ff{Q}\,:\, x\in\nonsquare_Q \land x+a\in\nonsquare_Q\}.
\end{align*}
Bergman, Coulter and Fain \cite{BCF-2025-ocran} determined these sets
explicitly. An examination of their results shows the largest cardinality is
$(Q+3)/4$ for $K_1(a)$ when $a\in\square_Q$.
Thus, $\delta_f=(Q+3)/4$, as claimed.
\end{proof}

\bibliographystyle{amsplain}

\begin{thebibliography}{10}

\bibitem{BC-2022-cfwld}
E.~Bergman and R.S. Coulter, \emph{{Constructing functions with low
  differential uniformity}}, Mediterranean J. Math. \textbf{19} (2022), Paper
  No. 94, 22 pp.

\bibitem{BCF-2025-ocran}
E.~Bergman, R.S. Coulter, and B.~Fain, \emph{{On consecutive residues and
  nonresidues under a linear map in a finite field}}, Bull. Aust. Math. Soc.
  \textbf{111} (2025), 505--512.

\bibitem{C-2019-ocpop}
R.S. Coulter, \emph{{On coordinatising planes of prime power order using finite
  fields}}, J. Aust. Math. Soc. \textbf{106} (2019), 184--199.

\bibitem{CH-2008-cpas}
R.S. Coulter and M.~Henderson, \emph{Commutative presemifields and semifields},
  Adv. Math. \textbf{217} (2008), 282--304.

\bibitem{CHK-2007-ppfcs}
R.S. Coulter, M.~Henderson, and P.~Kosick, \emph{Planar polynomials for
  commutative semifields with specified nuclei}, Des. Codes Cryptogr.
  \textbf{44} (2007), 275--286.

\bibitem{CM-1997-pfapo}
R.S. Coulter and R.W. Matthews, \emph{{Planar functions and planes of
  Lenz-Barlotti class II}}, Des. Codes Cryptogr. \textbf{10} (1997), 167--184.

\bibitem{CS-xxxx-motno}
R.S. Coulter and S.~Senger, \emph{{More on the number of distinct values of a
  class of functions}}, submitted.

\bibitem{CS-2014-otnod}
\bysame, \emph{{On the number of distinct values of a class of functions with
  finite domain}}, Ann. Comb. \textbf{18} (2014), 233--243.

\bibitem{D-1968-fg}
P.~Dembowski, \emph{{Finite Geometries}}, Springer-Verlag, New York,
  Heidelberg, Berlin, 1968, reprinted 1997.

\bibitem{D-1905-doaga}
L.E. Dickson, \emph{Definitions of a group and a field by independent
  postulates}, Trans. Amer. Math. Soc. \textbf{6} (1905), 198--204.

\bibitem{D-1905-ofa}
\bysame, \emph{On finite algebras}, Nachr. Kgl. Ges. Wiss. G{\" o}ttingen,
  Math.-phy. Klasse (1905), 358--393.

\bibitem{F-1982-afont}
R.~Figueroa, \emph{A family of not $(v,l)$-transitive projective planes of
  order $q^3, \ q \not \equiv 1 \bmod 3$ and $q>2$}, Math. Z. \textbf{181}
  (1982), 471--479.

\bibitem{H-1943-pp}
M.~Hall, \emph{Projective planes}, Trans. Amer. Math. Soc \textbf{54} (1943),
  229--277.

\bibitem{H-1957-acond}
D.R. Hughes, \emph{{A class of non-Desarguesian projective planes}}, Canad. J.
  Math. \textbf{9} (1957), 378--388.

\bibitem{J-2023-asopp}
K.~Joshi, \emph{{A study of projective planes: coordinatisation and
  construction}}, Ph.D. thesis, Department of Mathematical Sciences, University
  of Delaware, USA, 2023.

\bibitem{KW-2003-sspal}
W.M. Kantor and M.E. Williams, \emph{{Symplectic semifield planes and ${\mathbb
  Z}_4$-linear codes}}, Trans. Amer. Math. Soc. \textbf{356} (2003), 895--938.

\bibitem{K-2009-csoop}
P.~Kosick, \emph{{Commutative semifields of odd order and planar
  Dembowski-Ostrom polynomials}}, Ph.D. thesis, Department of Mathematical
  Sciences, University of Delaware, USA, 2009.

\bibitem{L-1878-tdfns}
E.~Lucas, \emph{{Th\' eorie des fonctions num\' eriques simplement p\'
  eriodiques}}, Amer. J. Math \textbf{1} (1878), 184--240, 289--321.

\bibitem{M-2017-pposo}
E.~Moorhouse, \emph{{Projective planes of small order}},
  \url{http://ericmoorhouse.org/pub/planes/}, 2017, Accessed: 2025-11-10.

\bibitem{N-1993-dumic}
K.~Nyberg, \emph{Differentially uniform mappings in cryptography}, Advances in
  Cryptology -- Eurocrypt '93 (T.~Helleseth, ed.), Lecture Notes in Computer
  Science, vol. 765, 1993, pp.~55--64.

\bibitem{VW-1907-ndanp}
O.~Veblen and J.H. Wedderburn, \emph{{Non-desarguesian and non-Pascalian
  geometries}}, Trans. Amer. Math. Soc. \textbf{8} (1907), 379--388.

\bibitem{Z-1935-uef}
H.~Zassenhaus, \emph{{Uber endlicke Fastoper}}, Abh. Math. Sem. Univ. Hamburg
  \textbf{11} (1935), 187--220.

\end{thebibliography}

\providecommand{\bysame}{\leavevmode\hbox to3em{\hrulefill}\thinspace}
\providecommand{\MR}{\relax\ifhmode\unskip\space\fi MR }
\providecommand{\MRhref}[2]{%
  \href{http://www.ams.org/mathscinet-getitem?mr=#1}{#2}
}
\providecommand{\href}[2]{#2}

\end{document}